\titleclass{\subsubsubsection}{straight}[\subsection]
\newcounter{subsubsubsection}[subsubsection]
\renewcommand\thesubsubsubsection{\thesubsubsection.\arabic{subsubsubsection}}
\renewcommand\paragraph{\@startsection{paragraph}{5}{\z@}%
 {3.25ex \@plus1ex \@minus.2ex}%
 {-1em}%
 {\normalfont\normalsize\bfseries}}
\renewcommand\subparagraph{\@startsection{subparagraph}{6}{\z@}
 {3.25ex \@plus1ex \@minus .2ex}%
 {-1em}%
 {\normalfont\normalsize\bfseries}}
\def\toclevel@subsubsubsection{4}
\def\toclevel@paragraph{5}
\def\toclevel@paragraph{6}
\def\l@subsubsubsection{\@dottedtocline{4}{7em}{4em}}
\def\l@paragraph{\@dottedtocline{5}{10em}{5em}}
\def\l@subparagraph{\@dottedtocline{6}{14em}{6em}}
\newcommand{\Z}{\mathbb{Z}}
\newcommand{\Q}{\mathbb{Q}}
\numberwithin{equation}{section}
\newtheorem{lemma}{Lemma}[section]
\newtheorem{theorem}[lemma]{Theorem}
\newtheorem{prop}[lemma]{Proposition}
\newtheorem{corollary}[lemma]{Corollary}
\newtheorem{definition}[lemma]{Definition}
\newtheorem{question}[lemma]{Question}
\newtheorem{comments}[lemma]{Comments}
\title{\vspace{-\baselineskip}\sffamily\bfseries Binary forms with the same value set II. The case of ${\bf D}_4$}
\author[1]{\'Etienne Fouvry\thanks{CNRS, Laboratoire de math\' ematiques d'Orsay, Universit\' e Paris--Saclay, 91405 Orsay, France, etienne.fouvry@universite-paris-saclay.fr}}
\author[2]{Peter Koymans\thanks{Institute for Theoretical Studies, ETH Zurich, 8092 Zurich, Switzerland, peter.koymans@eth-its.ethz.ch}}
\affil[1]{Universit\'e Paris--Saclay}
\affil[2]{ETH Zurich}
\date{\today}
\begin{document}
\maketitle

\begin{abstract} 
Let $F, G \in \Z[X, Y]$ be binary forms of degree $\geq 3$, non-zero discriminant and with automorphism group isomorphic to $D_4$. If $F(\Z^2) = G(\Z^2)$, we show that $F$ and $G$ are ${\rm GL}(2, \Z)$--equivalent.
\end{abstract}

\section{Introduction} 
This paper is the continuation of \cite{FKC3} and deals with the following question: 

\begin{question}
\label{question} 
Let $d\geq 3$ and let $F(X,Y)$ and $G(X,Y)$ two binary forms of ${\rm Bin}(d, \Q)$ (the set of binary forms with degree $d$, with rational coefficients and with discriminant different from zero) such that $F(\Z^2) = G(\Z^2)$. Does there exist $\gamma := \begin{pmatrix} a & b \\ c & d \end{pmatrix} \in {\rm GL}(2,\Z)$ such that 
\begin{equation}
\label{Fcircgamma}
F(aX + bY, cX + dY) = G(X, Y)?
\end{equation}
If there exists no such matrix $\gamma$, the pair $(F, G)$ is called an extraordinary pair and the form $F$ is called extraordinary.
\end{question}

In order to answer this question, the article \cite{FKC3} shows the crucial importance of the group of automorphisms of $F$ defined as
$$
{\rm Aut}(F, \Q) := \left\{\gamma \in {\rm GL}(2,\Q) : F \circ \gamma = F\right\},
$$
where $F \circ \gamma$ denotes the action of $\gamma$ by linear change of variables (see \eqref{Fcircgamma}). It is known that, for any $F\in {\rm Bin}(d, \Q)$, the group ${\rm Aut}(F, \Q)$ is ${\rm GL}(2, \Q)$--conjugate to one element among the set $\mathfrak K$ of ten subgroups of ${\rm GL}(2, \Z)$ defined by 
\begin{equation}
\label{deffrakK}
\mathfrak K = \{{\bf C}_1, {\bf C}_2, {\bf C}_3, {\bf C}_4, {\bf C}_6, {\bf D}_1, {\bf D}_2, {\bf D}_3, {\bf D}_4, {\bf D}_6\},
\end{equation}
where the letters ${\bf C}_k$ and ${\bf D}_\ell$ respectively correspond to cyclic and dihedral subgroups with cardinality $k$ and $2\ell$. For the definition of these subgroups, we refer the reader to \cite[Lemma 5.1]{FKC3}. In \cite{FKC3}, we treated the cases of ${\bf C}_1, {\bf C}_2, {\bf C}_3, {\bf C}_4, {\bf C}_6, {\bf D}_1, {\bf D}_2$, where we prove that an affirmative answer to Question \ref{question} depends on the existence, in the group of automorphisms, of elements of order $3$ with special type. The cases of ${\bf D}_3$ and ${\bf D}_6$ will be treated in \cite{FKD3D6}, bringing to light a similar characterization but in a more intricate context.

In the present paper, we are concerned with the case of ${\bf D}_4$, which is the dihedral subgroup of ${\rm GL}(2, \Z)$ generated by the two matrices
$$
\begin{pmatrix} 
0 & 1\\ 1 & 0
\end{pmatrix}
\text{ and }
\begin{pmatrix} 0 & 1 \\ -1 & 0
\end{pmatrix}.
$$
This subgroup has order $8$ and every element has order $1$, $2$ or $4$. Write 
$$
G(X,Y) = a_d X^d +a_{d-1} X^{d-1} Y + \cdots +a_1 XY^{d-1} +a_0 Y^d.
$$
Let $G \in {\rm Bin}(d,\Q)$. It is not difficult to show that ${\bf D}_4\subseteq {\rm Aut}(G, \Q)$ if and only if one has the equalities 
\begin{equation}
\label{ak<->ad-k}
a_k = 0 \text{ if } k \equiv 1 \bmod 2 \text{ and } a_k = a_{d - k} \text { for } 0\leq k \leq d.
\end{equation}
In particular, $d$ is even so $d \geq 4$. Furthermore, we have ${\rm Aut}(G, \Q) = {\bf D}_4$, since, in the list $\mathfrak K$ (see \eqref{deffrakK}) there is no group with cardinality strictly divisible by $8$. Finally, every form $F$ satisfying ${\rm Aut}(F, \Q) = \lambda^{-1} {\bf D}_4 \lambda$, with $\lambda \in {\rm GL}(2, \Q)$, has the shape
$$
F =G \circ \lambda,
$$ 
where $G$ satisfies \eqref{ak<->ad-k}. This follows from the general conjugation formula ${\rm Aut}(F \circ \lambda, \Q) = \lambda^{-1} {\rm Aut}(F, \Q) \lambda$.

Our central result is 

\begin{theorem}
\label{centralD4} 
Let $d \geq 4$. Then there is no extraordinary form $F$ such that 
$$
{\rm Aut}(F, \Q) \simeq_{{\rm GL}(2, \Q)} {\bf D}_4.
$$
\end{theorem}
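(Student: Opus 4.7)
Suppose for contradiction that $F$ is extraordinary with partner $G$, so $F(\Z^2) = G(\Z^2)$, with $F$ and $G$ not $\mathrm{GL}(2,\Z)$-equivalent, and $\mathrm{Aut}(F, \Q) \simeq_{\mathrm{GL}(2,\Q)} \mathbf{D}_4$. My plan is to combine the rigidity imposed by $\mathbf{D}_4$-symmetry with the machinery developed in \cite{FKC3}, exploiting the arithmetic fact that $|\mathbf{D}_4|=8$ has no element of order $3$.

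\textbf{Normalization and descent.} By the discussion following \eqref{ak<->ad-k}, there exist $\lambda \in \mathrm{GL}(2,\Q)$ and a form $G_0$ satisfying \eqref{ak<->ad-k} such that $F = G_0 \circ \lambda$; in particular, since $G_0$ has vanishing odd-indexed coefficients and palindromic even ones, it admits the square descent $G_0(X,Y) = P(X^2, Y^2)$ with $P \in \Q[S,T]$ a symmetric binary form of degree $d/2 \geq 2$ and nonzero discriminant. Using the principle --- to be isolated from \cite{FKC3} --- that equality of value sets forces $\mathrm{Aut}(F, \Q)$ and $\mathrm{Aut}(G, \Q)$ to be $\mathrm{GL}(2, \Q)$-conjugate, the partner $G$ inherits the same structure: $G = H_0 \circ \mu$ with $\mu \in \mathrm{GL}(2, \Q)$ and $H_0(X, Y) = Q(X^2, Y^2)$ for a symmetric binary form $Q$ of the same degree. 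The problem thus reduces to comparing the values of two symmetric degree-$d/2$ forms evaluated at pairs of squares lying on two $\mathrm{GL}(2,\Q)$-equivalent lattices.

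\textbf{Invoking the $\mathbf{D}_2$-framework.} The Klein four-subgroup $\mathbf{D}_2 = \{\pm I, \pm \sigma\}$ (with $\sigma$ the swap) sits inside $\mathbf{D}_4$, so both $F$ and $G$ are in particular $\mathbf{D}_2$-invariant. The $\mathbf{D}_2$-theorem of \cite{FKC3} characterizes extraordinary pairs in the $\mathbf{D}_2$-regime via the existence of an order-$3$ element of a special type inside the automorphism group. Since $\mathbf{D}_4$ has order $8$, it contains no element of order $3$ whatsoever. Modulo the technical point discussed below, this forbids any such witness and yields the contradiction.

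\textbf{Main obstacle.} The key difficulty is the transition from the $\mathbf{D}_2$-results in \cite{FKC3}, presumably formulated under the exact hypothesis $\mathrm{Aut}(F, \Q) = \mathbf{D}_2$, to our setting of strict containment $\mathbf{D}_2 \subsetneq \mathbf{D}_4 = \mathrm{Aut}(F, \Q)$. I envisage two natural resolutions: either (i) revisit the proof of the $\mathbf{D}_2$-theorem in \cite{FKC3} to verify that it uses only $\mathbf{D}_2$-invariance, so that it extends verbatim whenever $\mathrm{Aut}(F, \Q) \supseteq \mathbf{D}_2$; or (ii) rerun the argument directly in the $\mathbf{D}_4$ setting, leveraging the extra order-$4$ generator of $\mathbf{D}_4 \setminus \mathbf{D}_2$ to constrain the factorization of $P$ and $Q$ and thereby force $F = G \circ \gamma$ for some $\gamma \in \mathrm{GL}(2,\Z)$. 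Route (ii) appears more robust: the extra symmetry should dramatically narrow the class of admissible partners, since the $\mathbf{D}_4$-orbit structure is much finer than that of $\mathbf{D}_2$. Either way, the essential arithmetic input is the absence of order-$3$ elements, and the remaining work lies in converting this into a precise rigidity statement for the descent data $(P, \lambda)$ and $(Q, \mu)$.
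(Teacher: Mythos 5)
There is a genuine gap, and it sits exactly where you placed your ``main obstacle'': the reduction to the $\mathbf{D}_2$ theorem of \cite{FKC3} does not go through, and neither of your proposed resolutions is carried out or could be carried out as stated. The characterization in \cite{FKC3} via order-$3$ elements of special type applies to forms whose \emph{full} automorphism group is $\mathrm{GL}(2,\Q)$-conjugate to $\mathbf{C}_1,\dots,\mathbf{C}_6,\mathbf{D}_1$ or $\mathbf{D}_2$; the whole classification is organized by the conjugacy class of $\mathrm{Aut}(F,\Q)$ itself, not by subgroups it contains. A form with $\mathrm{Aut}(F,\Q)\simeq \mathbf{D}_4$ is simply outside the scope of that theorem, and your route (i) --- that the $\mathbf{D}_2$ proof ``extends verbatim'' under mere containment $\mathbf{D}_2\subseteq \mathrm{Aut}(F,\Q)$ --- is false: the machinery of \cite{FKC3} (Proposition \ref{essential} here) produces a covering of $\Z^2$ by $|\mathrm{Aut}(G_2,\Q)|/2$ lattices, so the $\mathbf{D}_2$ case leads to coverings by two lattices (for which no nontrivial minimal covering exists), whereas $\mathbf{D}_4$ leads to coverings by four lattices, and the combinatorics changes completely --- Theorem \ref{listofcoverings} exhibits five distinct nontrivial minimal coverings of length $\leq 4$. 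Your route (ii) is a statement of intent, not an argument. Moreover, the ``essential arithmetic input'' is not the absence of order-$3$ elements: the paper's proof never mentions order-$3$ elements. In \cite{FKC3} such elements are the \emph{mechanism producing} extraordinary pairs for the groups treated there; their absence in $\mathbf{D}_4$ is heuristic evidence for the theorem, not a proof of it.

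What is actually required --- and what the paper does --- is to classify all minimal coverings of $\Z^2$ of length at most four (Theorem \ref{listofcoverings}), write down the explicit congruence equations of the four lattices $\Lambda(\sigma)$ attached to the covering \eqref{eD4cover} in terms of the diagonal matrix $\gamma=\mathrm{diag}(D,D/\nu)$ and a conjugating matrix $T_2$ with coprime integer entries $t_1,\dots,t_4$ and determinant $\pm d_2$, and then eliminate every admissible triple $(D,\nu,d_2)$ by a long case analysis on the residues and $2$-adic valuations of the $t_i$, showing in each case that the four lattices cannot realize any of the minimal coverings on the list. None of this is present in, or substitutable by, your sketch; the square descent $G_0(X,Y)=P(X^2,Y^2)$, while correct, plays no role in the argument. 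As it stands the proposal identifies the difficulty but does not resolve it.
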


\noindent This theorem was already announced in \cite[Theorem A]{FKC3}. One simple consequence is

\begin{corollary}
Let $F\in {\rm Bin}(4, \Q)$ such that
$$
F(\Z^2) = \{m : m = t^4 + u^4 \textup{ for some } (t, u) \in \Z^2\}.
$$
Then there exists $(a, b, c, d) \in \Z^4$, with $ad - bc = \pm 1$, such that 
$$
F(X, Y) = (aX + cY)^4 + (bX + dY)^4.
$$
\end{corollary}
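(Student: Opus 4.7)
The plan is to reduce the corollary to Theorem \ref{centralD4}. I would start by setting $G(X, Y) = X^4 + Y^4$, which lies in ${\rm Bin}(4, \Q)$ and whose coefficients satisfy \eqref{ak<->ad-k} (namely $a_0 = a_4 = 1$ and $a_1 = a_2 = a_3 = 0$); hence ${\bf D}_4 \subseteq {\rm Aut}(G, \Q)$, and since no group in the list $\mathfrak K$ strictly contains ${\bf D}_4$, this inclusion is an equality, giving ${\rm Aut}(G, \Q) = {\bf D}_4$.

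The first substantive step is to upgrade the hypothesis $F(\Z^2) = G(\Z^2)$ into the statement
$$
{\rm Aut}(F, \Q) \simeq_{{\rm GL}(2, \Q)} {\bf D}_4.
$$
For this I would invoke the rigidity framework of \cite{FKC3}: among forms in ${\rm Bin}(d, \Q)$, the ${\rm GL}(2, \Q)$--conjugacy class of the automorphism group is an invariant of the value set, so ${\rm Aut}(F, \Q)$ must fall into the same conjugacy class as ${\rm Aut}(G, \Q) = {\bf D}_4$. This transfer is the main obstacle: one must quote the appropriate invariance result from the companion paper and check that it applies verbatim to the pair $(F, G)$ at hand.

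Once the automorphism type of $F$ is pinned down, Theorem \ref{centralD4} forbids $F$ from being extraordinary, so the pair $(F, G)$ is not extraordinary. By the definition given in Question \ref{question}, there exists $\gamma \in {\rm GL}(2, \Z)$ with $F \circ \gamma = G$. Writing $\gamma^{-1} = \begin{pmatrix} a & c \\ b & d \end{pmatrix} \in {\rm GL}(2, \Z)$, one has $ad - bc = \det(\gamma^{-1}) = \pm 1$, and expanding gives
$$
F(X, Y) = G\bigl(aX + cY,\, bX + dY\bigr) = (aX + cY)^4 + (bX + dY)^4,
$$
which is exactly the shape of $F$ asserted in the corollary. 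Apart from the rigidity input above, the argument is purely formal.
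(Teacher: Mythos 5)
Your argument is correct in substance and rests on the same central input, Theorem \ref{centralD4}, but you route it through $F$ where the paper routes it through $G$. The paper's entire proof is one sentence: by \cite[Lemma 3.3]{SX} one has ${\rm Aut}(X^4+Y^4, \Q) = {\bf D}_4$, so Theorem \ref{centralD4} says $G = X^4+Y^4$ is not extraordinary; since $F(\Z^2) = G(\Z^2)$, the pair $(G, F)$ is therefore not an extraordinary pair, which by Question \ref{question} is exactly the existence of a matrix in ${\rm GL}(2,\Z)$ carrying $G$ to $F$, and expanding gives the stated shape. This completely bypasses the step you yourself flag as ``the main obstacle,'' namely transferring the automorphism type from $G$ to $F$. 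Your rigidity claim is in fact true and recoverable from the paper --- if the pair $(F,G)$ is not extraordinary you are done outright, and if it is extraordinary then item \textit{1.} of Proposition \ref{essential} gives $F = G \circ \rho$ with $\rho \in {\rm GL}(2,\Q)$, whence ${\rm Aut}(F,\Q) = \rho^{-1}\,{\rm Aut}(G,\Q)\,\rho \simeq_{{\rm GL}(2,\Q)} {\bf D}_4$ by \eqref{conjugationformula}, after which Theorem \ref{centralD4} yields a contradiction --- but left as an unspecified citation it is the one soft spot in your write-up, and the dichotomy just described shows it is also unnecessary: one should simply apply the theorem to the form whose automorphism group is already known. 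Your derivation of ${\rm Aut}(X^4+Y^4,\Q) = {\bf D}_4$ from \eqref{ak<->ad-k} and the list \eqref{deffrakK} is a legitimate substitute for the citation of \cite{SX}, and the final change-of-variables bookkeeping (transposing the matrix to match the corollary's indexing, with $ad-bc = \pm 1$ preserved) is correct.
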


The corollary follows upon combing Theorem \ref{centralD4} with \cite[Lemma 3.3]{SX}, which asserts that ${\rm Aut}(X^4 + Y^4, \Q) = {\bf D}_4$.

\subsection*{Acknowledgements} 
The first author thanks Michel Waldschmidt for inspiring the thema of this paper, for sharing his ideas and for his encouragements. The second author gratefully acknowledges the support of Dr. Max R\"ossler, the Walter Haefner Foundation and the ETH Z\"urich Foundation.

\section{From extraordinary forms to coverings } 
\subsection{Some definitions}
By a {\it lattice}, we mean an additive subgroup of $\Z^2$ with rank $2$. This lattice is {\it proper} when it is different from $\Z^2$. If $\Lambda$ is a lattice generated by $\vec u, \vec v \in \Z^2$, the {\it index} of $\Lambda$ is the positive integer
\begin{equation}
\label{=det}
[\Z^2 : \Lambda] = \vert \Z^2/\Lambda \vert = \vert \det(\vec u, \vec v) \vert.
\end{equation}

\begin{definition}
\label{defL}
Let $\gamma \in {\rm GL}(2, \Q)$. By definition the lattice associated to $ \gamma$ is the subset $L (\gamma)$ of $\Z^2$ defined by
$$
L(\gamma) := \left\{(x, y)\in \Z^2 : \gamma\begin{pmatrix} x \\ y \end{pmatrix} \in \Z^2\right\}.
$$
\end{definition}

We will use the obvious remarks
\begin{equation}
\label{obvious0}
L(\gamma) = L(-\gamma)
\end{equation}
and
\begin{equation}
\label{obvious}
L(\gamma) = \Z^2 \iff \gamma \text{ has integer coefficients}.
\end{equation}
We recall the following property, which is proved by a direct calculation (see \cite [Lemma 6.9]{FKC3}).

\begin{lemma}
\label{multipleofdet-1} 
Let $\gamma \in {\rm GL}(2, \Q)$. Then $[\Z^2 : L(\gamma)]$ is an integer multiple of $\vert \det \gamma \vert^{-1}$.
\end{lemma}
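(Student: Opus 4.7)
The plan is to compare the lattice $L(\gamma)$ with its image $\gamma(L(\gamma))$, which by construction lies inside $\Z^2$. First I would verify that $L(\gamma)$ is genuinely a rank-$2$ lattice: writing $\gamma = N^{-1} M$ with $M \in M_2(\Z)$ and $N \in \N$, the sublattice $N\Z^2$ is contained in $L(\gamma)$ since $\gamma(N v) = Mv \in \Z^2$ for every $v \in \Z^2$. Hence $[\Z^2 : L(\gamma)]$ is a finite positive integer and $L(\gamma)$ admits a $\Z$-basis $\vec u, \vec v$.

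Next I would exploit the bijection $\gamma \colon L(\gamma) \to \gamma(L(\gamma))$. By definition of $L(\gamma)$, the image $\gamma(L(\gamma))$ is a subgroup of $\Z^2$, and it is generated by $\gamma \vec u$ and $\gamma \vec v$. Using the identity \eqref{=det}, I would compute
$$
[\Z^2 : \gamma(L(\gamma))] = \vert \det(\gamma \vec u, \gamma \vec v) \vert = \vert \det \gamma \vert \cdot \vert \det(\vec u, \vec v) \vert = \vert \det \gamma \vert \cdot [\Z^2 : L(\gamma)].
$$
Since $\gamma(L(\gamma))$ is a rank-$2$ sublattice of $\Z^2$, the left-hand side is a positive integer. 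Therefore $\vert \det \gamma \vert \cdot [\Z^2 : L(\gamma)] \in \N$, i.e., $[\Z^2 : L(\gamma)]$ is an integer multiple of $\vert \det \gamma \vert^{-1}$, as required.

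There is no real obstacle here; the only point demanding a line of care is that $\gamma(L(\gamma))$ has rank $2$, which follows at once from the invertibility of $\gamma$ and the rank-$2$ property of $L(\gamma)$ established in the first step.
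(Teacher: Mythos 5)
Your argument is correct and complete: establishing that $N\Z^2 \subseteq L(\gamma)$ guarantees $L(\gamma)$ has rank $2$ and finite index, and the computation $[\Z^2 : \gamma(L(\gamma))] = \vert \det \gamma \vert \cdot [\Z^2 : L(\gamma)]$ via \eqref{=det}, combined with the integrality of the left-hand side, gives exactly the claim. The paper itself offers no proof here --- it defers to Lemma 6.9 of the companion paper [FKC3], described only as ``a direct calculation'' --- so there is nothing in this text to compare against; your index-of-the-image-lattice argument is a clean, self-contained justification of the cited fact.
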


Several times we will use the next easy lemma, where $v_2(n)$ is the $2$--adic valuation of the integer $n$ with the convention $v_2(0) = + \infty$.

\begin{lemma}
\label{compare2adic}
Let $\alpha, \beta, \gamma \in \Z$ with $\gamma \neq 0$. Suppose that $v_2(\alpha) < v_2(\beta)$ and $v_2(\alpha) < v_2(\gamma)$. Then the lattice defined by the equation
$$
\alpha x_1 + \beta x_2 \equiv 0 \bmod \gamma
$$
is included in the lattice
$$
\{(x_1, x_2) : x_1 \equiv 0 \bmod 2\}.
$$
\end{lemma}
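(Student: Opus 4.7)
The plan is a direct $2$-adic valuation argument, using that $v_2(\alpha)$ is strictly smaller than both $v_2(\beta)$ and $v_2(\gamma)$. Let me set $v := v_2(\alpha)$ and write $\alpha = 2^v \alpha'$ with $\alpha'$ odd; the hypotheses then read $v_2(\beta) \geq v+1$ and $v_2(\gamma) \geq v+1$.

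Now I take $(x_1, x_2) \in \Z^2$ lying in the lattice, i.e.\ $\gamma \mid \alpha x_1 + \beta x_2$. In particular $2^{v+1}$ divides $\alpha x_1 + \beta x_2$. But $2^{v+1}$ already divides $\beta x_2$ by the hypothesis on $v_2(\beta)$, so subtracting gives $2^{v+1} \mid \alpha x_1 = 2^v \alpha' x_1$. Dividing through by $2^v$ and cancelling the odd factor $\alpha'$ yields $2 \mid x_1$, which is the desired conclusion.

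There is essentially no obstacle to overcome: the content of the lemma is the elementary observation that the $2$-adically smallest summand in $\alpha x_1 + \beta x_2$ is $\alpha x_1$ (so long as $x_1$ is odd), and this forces the sum to have $2$-adic valuation exactly $v$, contradicting divisibility by $\gamma$ whose valuation is $\geq v+1$. The only point to watch is bookkeeping the strict inequality $v_2(\alpha) < v_2(\gamma)$, which is exactly what provides the critical gain of one factor of $2$ used in the final step.
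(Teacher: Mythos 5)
Your proof is correct. The paper itself gives no proof of this lemma (it is stated as an ``easy lemma'' without justification), and your direct $2$-adic valuation argument --- isolating $\alpha x_1$ as the unique summand of minimal valuation and using the strict inequalities to force $2^{v+1} \mid 2^v \alpha' x_1$, hence $2 \mid x_1$ --- is exactly the standard argument the authors evidently have in mind.
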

 
\begin{definition}
\label{Def2.2} 
Let $F_1$ and $F_2$ be two forms in ${\rm Bin}(d, \Q)$. By definition, an isomorphism from $F_1$ to $F_2$ is an element $\phi \in {\rm GL}(2, \Q)$ such that $F_1 \circ \phi = F_2$. The set of all such isomorphisms is denoted by ${\rm Isom}(F_1 \rightarrow F_2, \Q)$. Suppose ${\rm Isom}(F_1 \rightarrow F_2, \Q)$ is not empty and let $\rho$ be one of its elements. Then we have the equalities
\begin{align*}
{\rm Isom}(F_1 \rightarrow F_2, \Q) &= \rho \cdot {\rm Aut}(F_2, \Q) = {\rm Aut}(F_1, \Q) \cdot \rho \\
{\rm Isom}(F_2 \rightarrow F_1, \Q) &= \rho^{-1} \cdot {\rm Aut}(F_1, \Q) = {\rm Aut}(F_2, \Q) \cdot \rho^{-1}.
\end{align*}
\end{definition}

We extract from \cite{FKC3} the following key proposition 

\begin{prop}
\label{essential} 
Let $d\geq 3$ and let $(F_1, F_2)$ be a pair of extraordinary forms. Then there exists $\rho \in {\rm GL}(2,\Q)$, a pair of extraordinary forms $(G_1, G_2)$ and a pair $(D, \nu)$ of positive integers such that
\begin{enumerate}
\item we have $F_1 = F_2 \circ \rho$,
\item we have
$$
\Bigl(G_1 \sim_{{\rm GL}(2,\Z)} F_1 \textup{ and } G_2 \sim_{{\rm GL}(2,\Z)} F_2\Bigr) \textup{ or } \Bigl(G_1 \sim_{{\rm GL}(2,\Z)} F_2 \textup{ and } G_2\sim_{{\rm GL}(2,\Z)} F_1\Bigr),
$$
\item we have
\begin{equation}
\label{conditionsforDandnu}
D, \nu \geq 1, D \nu > 1, D \mid \nu \textup{ and } 1 \leq \nu \leq D^2.
\end{equation}
The matrix 
\begin{equation}
\label{defgamma}
\gamma := \begin{pmatrix} D & 0\\ 0 & D/\nu
\end{pmatrix}
\end{equation}
satisfies $G_1 = G_2 \circ \gamma$,
\item we have
\begin{equation}
\label{gammaminimal}
[\Z^2 : L (\gamma)] = \min \left\{[\Z^2 : L( \tau)] : \tau \in {\rm Isom}(G_1 \rightarrow G_2, \Q) \cup {\rm Isom}(G_2 \rightarrow G_1, \Q)\right\},
\end{equation}
\item and finally, we have the two coverings
$$
\Z^2 = \bigcup_{\tau \in {\rm Isom}(G_1 \rightarrow G_2, \Q)} L(\tau) = \bigcup_{\tau \in {\rm Isom}(G_2 \rightarrow G_1, \Q)} L(\tau).
$$
\end{enumerate}
\end{prop}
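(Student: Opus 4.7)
My plan is as follows.

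The first step is to upgrade the hypothesis $F_1(\Z^2) = F_2(\Z^2)$ to a $\mathrm{GL}(2,\Q)$--equivalence, producing the matrix $\rho$ of item~(1). The mechanism is a Thue-type counting argument from \cite{FKC3}: a generic value $m$ taken by $F_1$ admits boundedly many integer representations, so matching the representations by $F_1$ and by $F_2$ for a density-one sequence of $m$'s forces a single rational change of variables $\rho \in \mathrm{GL}(2,\Q)$ with $F_1 = F_2 \circ \rho$.

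For items~(2)--(4), let $\mathcal I := \mathrm{Isom}(F_1 \to F_2, \Q) \cup \mathrm{Isom}(F_2 \to F_1, \Q)$, which equals $\rho \cdot \mathrm{Aut}(F_2, \Q) \cup \rho^{-1} \cdot \mathrm{Aut}(F_1, \Q)$ by Definition~\ref{Def2.2} and is therefore finite, since each $\mathrm{Aut}(F_i, \Q)$ is $\mathrm{GL}(2,\Q)$--conjugate to one of the ten finite groups of $\mathfrak K$. Choose $\rho_0 \in \mathcal I$ minimizing $[\Z^2 : L(\cdot)]$ and apply Smith normal form to obtain $U, V \in \mathrm{GL}(2,\Z)$ with $U \rho_0 V$ diagonal. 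Setting $G_1 := F_1 \circ V$ and $G_2 := F_2 \circ U^{-1}$ (interchanging the labels if $\rho_0 \in \mathrm{Isom}(F_2 \to F_1, \Q)$) gives $G_1 = G_2 \circ \gamma$ with $\gamma = U \rho_0 V$ diagonal; since $L(\gamma) = V^{-1} L(\rho_0)$ has the same index as $L(\rho_0)$, property (4) persists for the new pair. The shape $\gamma = \mathrm{diag}(D, D/\nu)$ with $D,\nu \in \Z_{\geq 1}$ and $D \mid \nu$ will be forced by the covering in the next step, and the bound $\nu \leq D^2$ then comes from minimality: a direct computation shows $[\Z^2 : L(\gamma)] = \nu/D$ while $[\Z^2 : L(\gamma^{-1})] = D$, with $\gamma^{-1} \in \mathcal I$, so $\nu/D \leq D$. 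The extraordinary hypothesis rules out $\gamma = I$, hence $D\nu > 1$.

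The main obstacle is item~(5). For each $(x,y) \in \Z^2$ the hypothesis provides $(x',y') \in \Z^2$ with $G_1(x,y) = G_2(x',y')$, and one must upgrade this to $(x',y')^T = \tau(x,y)^T$ for some $\tau$ in the finite set $\mathrm{Isom}(G_1 \to G_2, \Q)$. Primitive vectors with sufficiently large coordinates are treated via Thue's theorem: only finitely many $(x',y')$ are possible, and a compatibility check matches each with a unique $\tau$. Non-primitive vectors descend through $(x,y) = \delta(x_0, y_0)$ using $G_1(\delta x_0, \delta y_0) = \delta^d G_1(x_0, y_0)$, reducing to the primitive case, and the remaining finitely many small exceptional vectors are dispatched by explicit inspection. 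The covering simultaneously pins down the specific diagonal shape of $\gamma$ claimed in (3), since otherwise the finitely many translates $L(\gamma\sigma)$ with $\sigma \in \mathrm{Aut}(G_2, \Q)$ cannot exhaust $\Z^2$ (in fact the denominator arithmetic of the entries of $\gamma$ is controlled by Lemma~\ref{multipleofdet-1} together with a covering-density count). This bookkeeping, bridging the pointwise equalities of values to the global lattice covering and forcing the diagonal entries of $\gamma$ to be an integer and the reciprocal of an integer, is the hardest part of the argument and is where the full machinery of \cite{FKC3} is indispensable.
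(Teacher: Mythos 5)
First, a point of reference: the paper does not actually prove Proposition \ref{essential}; it is imported wholesale from the companion paper \cite{FKC3} (``Lemma 2.4, \S 7.1, \S 7.2 \dots and Proposition 9.1''), so there is no in-text argument to compare yours against line by line. Judged on its own terms, your sketch handles the elementary bookkeeping of items 2--4 correctly: picking an element of ${\rm Isom}(F_1 \rightarrow F_2, \Q) \cup {\rm Isom}(F_2 \rightarrow F_1, \Q)$ of minimal lattice index, diagonalising it by Smith normal form (which replaces $L(\rho_0)$ by $V^{-1}L(\rho_0)$, hence preserves the index and transports minimality to the new pair $(G_1, G_2)$), and deducing $\nu \leq D^2$ by comparing $[\Z^2 : L(\gamma)] = \nu/D$ with $[\Z^2 : L(\gamma^{-1})] = D$ is exactly the right mechanism and is consistent with Comment 4 of \ref{listofcomments}.

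The genuine gap is in item 5, which you flag as the hard point but for which the mechanism you propose does not work as stated. Knowing that each value $G_1(x,y)$ is attained by $G_2$ at some integer point, combined with Thue's finiteness of representations of each individual $m$, does not let you conclude that the representing point is $\tau(x,y)^T$ for some $\tau \in {\rm Isom}(G_1 \rightarrow G_2, \Q)$: a single coincidence of values need not arise from an isomorphism of forms, and a pointwise ``compatibility check'' cannot exclude sporadic representations. The argument has to be global and quantitative: if $\bigcup_{\tau} L(\tau) \neq \Z^2$, the complement contains a full coset of a finite-index subgroup of $\Z^2$, hence a positive proportion of all lattice points; the values of $G_1$ at those points would then all have to be represented by $G_2$ accidentally, and one contradicts the asymptotics for $\#\{m : \vert m \vert \leq T,\ m \in G_i(\Z^2)\}$ in the style of Stewart--Xiao. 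That density argument (Proposition 9.1 of \cite{FKC3}) is the substance of item 5 and is absent from your sketch. Similarly, item 1 (the existence of a \emph{single} $\rho$ with $F_1 = F_2 \circ \rho$) and the precise normalisation $\gamma = {\rm diag}(D, D/\nu)$ with $D \mid \nu$ an integer are asserted rather than derived. In short, the proposal is a plausible table of contents for the proof in \cite{FKC3}, but the two steps that carry the real content are left as black boxes, and the one concrete bridge you offer for the covering statement would fail.
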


This proposition essentially gathers the following contents of \cite{FKC3}: Lemma 2.4, \S 7.1, \S 7.2 (particularly the relations (7.7), (7.8), (7.9)) and Proposition 9.1.

\begin{comments}
\label{listofcomments} 
\begin{enumerate}
\item This proposition is quite general, since it requires no assumption concerning the automorphism groups of $F_1$ or $F_2$. 

\item Item {\it 1.} will not be used in the sequel of the proof of Theorem \ref{centralD4}. It recalls the starting point of the construction of $\gamma$ and it implies that both ${\rm Aut}(F_1, \Q)$ and ${\rm Aut}(F_2, \Q)$ are ${\rm GL}(2, \Q)$--conjugate by the formula \eqref{conjugationformula}.

\item The pair $(G_1, G_2) $ is extraordinary with automorphism groups ${\rm GL}(2, \Z)$--conjugate with the automorphism groups of $F_1$ and $F_2$.

\item The initial problem is symmetrical in $(F_1, F_2)$. We eventually break this symmetry in item {\it 4.} to ensure the minimality of the index in \eqref{gammaminimal}. Note that $[\Z^2 : L (\gamma)] = \nu D^{-1}$.

\item Since $\gamma$ belongs to ${\rm Isom}(G_2 \rightarrow G_1, \Q)$, we can explicitly write the isomorphisms appearing in item {\it 5.} in terms of $\gamma$ and ${\rm Aut}(G_1, \Q)$ or ${\rm Aut}(G_2, \Q)$ (see Definition \ref{Def2.2}).

\item In fact, we will only use the first equality written in item 5. The second one will be used in \cite{FKD3D6}.
\end{enumerate}
\end{comments}

\section{About coverings }
\subsection{General notions}
Let $a$, $b$, $c$ and $d$ be four integers. To shorten notations, we write
$$ 
\Z \begin{pmatrix} 
a \\ b
\end{pmatrix} 
+ 
\Z \begin{pmatrix} 
c \\ d 
\end{pmatrix}
=:
\begin{bmatrix} 
a & c \\ b& d
\end{bmatrix}
$$
for the subgroup of $\Z^2$ generated by $(a, b)^T$ and $(c, d)^T$. Recall that if $ad - bc \neq 0$, this subgroup is a lattice. Furthermore, if $\vert ad - bc \vert \geq 2$, this is a proper lattice.

\begin{definition} 
Let $k \geq 1$ be an integer and let $(\Lambda_i)_{1 \leq i \leq k}$ be $k$ lattices. We say that 
$$
\mathcal C = \{\Lambda_1, \ldots, \Lambda_k\}
$$ 
is a covering of $\Z^2$ (or a covering) if and only if
$$
\bigcup_{1 \leq i \leq k} \Lambda_i = \Z^2.
$$
\end{definition}

\begin{definition}[Minimal covering]
Let $k \geq 1$, let $\Lambda_i$ be lattices and let $\mathcal C = \{\Lambda_1, \ldots, \Lambda_k\}$ be a covering. We say that $\mathcal C$ is a minimal covering of length $k$ if and only if replacing any $\Lambda_i$ by some proper sublattice $\Lambda'_i \varsubsetneq \Lambda_i$, the set 
$$
\{\Lambda_1, \ldots, \Lambda_{i - 1}, \Lambda'_i, \Lambda_{i + 1}, \ldots, \Lambda_k\}
$$
is not a covering.
\end{definition}

If $\mathcal C$ is a minimal covering and if $1 \leq i \neq j \leq k$, we never have $\Lambda_i \subseteq \Lambda_j$. In particular, for $k \geq 2$, every $\Lambda_i$ is a proper lattice. The following lemma asserts that from any covering one can extract a minimal covering

\begin{lemma}
\label{existenceminimal} 
Let $k \geq 1$ and let 
$$
\mathcal C := \{\Lambda_1, \ldots, \Lambda_k\}
$$
be a covering. Then there exists an integer $1 \leq k' \leq k$, an injection $\phi: \{1, \ldots, k'\} \rightarrow \{1, \ldots, k\}$, and lattices $\Lambda'_j$ ($1 \leq j \leq k'$), such that 
$$
\mathcal C ':= \{\Lambda'_1, \ldots, \Lambda'_{k'}\}
$$
is a minimal covering, and for all $1 \leq j \leq k'$ one has $\Lambda'_j \subseteq \Lambda_{\phi(j)}$.
\end{lemma}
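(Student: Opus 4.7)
The plan is to induct on $k$. The base $k=1$ is immediate: a one-lattice covering must be $\{\Z^2\}$, already minimal. For the inductive step, write $E_i:=\Lambda_i\setminus\bigcup_{j\neq i}\Lambda_j$; replacing $\Lambda_i$ by a sublattice $\Lambda'_i\varsubsetneq\Lambda_i$ keeps $\mathcal{C}$ a covering if and only if $E_i\subseteq\Lambda'_i$. If some $E_i=\emptyset$ then $\Lambda_i$ is redundant, so drop it and apply the induction hypothesis to a covering of length $k-1$. Hence I may assume $E_i\neq\emptyset$ for every $i$.

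The crucial point is then the claim: whenever $E_i\neq\emptyset$, the subgroup $\langle E_i\rangle\subseteq\Lambda_i$ has rank $2$, so is itself a sublattice of $\Lambda_i$. I would prove this by contradiction. Suppose $\langle E_i\rangle$ lies on a rational line $\ell$ through the origin. Choosing a $\Z$-basis of $\Lambda_i$ so that $\Lambda_i=\Z^2$ and $\ell$ is the $x$-axis, each intersection $\Lambda_i\cap\Lambda_j$ is a rank-$2$ sublattice of $\Z^2$ and admits a Hermite normal form description $\{(x,y):e_j\mid y,\ x\equiv c_j y/e_j\pmod{d_j}\}$ with $d_j,e_j\geq 1$. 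The presence of a point $(x_0,0)\in E_i$ on $\ell$ forces $d_j\nmid x_0$, and in particular $d_j\geq 2$, for every $j\neq i$. Choosing $Y$ to be a common multiple of every $d_je_j$, the fiber at $y=Y$ of $\bigcup_{j\neq i}(\Lambda_i\cap\Lambda_j)$ collapses to $\bigcup_j d_j\Z$, which does not contain $1$. This exhibits a point $(1,Y)\in E_i$ lying off $\ell$, a contradiction.

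Given the claim, the proof concludes by an explicit iterative shrinking: for $i=1,\ldots,k$ in turn, replace the current $\Lambda_i$ by $\langle E_i\rangle$ computed in the current covering (a legitimate sublattice by the claim, and a valid shrinking since $E_i\subseteq\langle E_i\rangle$). A direct check shows that this replacement leaves $E_i$ itself unchanged, so the new $\Lambda_i$ coincides with its own $\langle E_i\rangle$, i.e.\ slot $i$ becomes ``locked''; moreover, later shrinkings of other slots can only enlarge the remaining $E_l$'s and therefore cannot unlock a slot that has already been locked. After $k$ such steps every slot is locked, which is exactly the minimality condition. The main obstacle is the key claim on the rank of $\langle E_i\rangle$; everything else reduces to careful bookkeeping.
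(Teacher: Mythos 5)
Your proof is correct and follows essentially the same strategy as the paper's: first discard redundant lattices, then iteratively replace each remaining $\Lambda_i$ by the smallest sublattice containing $E_i=\Lambda_i\setminus\bigcup_{j\neq i}\Lambda_j$ (which coincides with the paper's $\Lambda'_s=\bigcap_{M_s\in\mathcal L_s}M_s$), and check that the resulting ``locked'' configuration is minimal. The only substantive difference is that your Hermite-normal-form fiber argument actually proves the key fact that $\langle E_i\rangle$ has rank $2$, a point the paper asserts with only a one-line justification.
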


\begin{proof} 
By reordering and suppressing some $\Lambda_i$ if necessary, we suppose that
\begin{equation}
\label{www}
\bigcup_{1 \leq i \leq s} \Lambda_i = \Z^2 \quad \text{ and } \quad \bigcup_{\substack{1 \leq i \leq s \\ i \neq j}} \Lambda_i \neq \Z^2 \quad \text{ for all } 1 \leq j \leq s
\end{equation}
for some $1 \leq s \leq k$. The case where $s = 1$ is trivial, so we suppose $s \geq 2$. Let 
$$
\mathcal L_s := \{ M_s \text{ lattice}: M_s \subseteq \Lambda_s,\ \Lambda_1 \cup \Lambda_2\cup \dots \cup \Lambda_{s -1} \cup M_s = \Z^2\}.
$$ 
We then have the equality
\begin{equation}
\label{4554}
\Z^2 = \Lambda_1 \cup \dots \cup \Lambda_{s -1} \cup \Lambda'_s,
\end{equation}
with $\Lambda'_s := \bigcap_{M_s \in \mathcal L_s} M_s$. The set $\Lambda'_s$ is a subgroup of $\Z^2$. Its rank can not be $0$ or $1$, because we would have the equality $\cup_{1 \leq i \leq s - 1} \Lambda_i = \Z^2$, which contradicts \eqref{www}. So $\Lambda'_s$ is a lattice and, by construction, it is the smallest lattice included in $\Lambda_s$ and satisfying \eqref{4554}. We continue the same process for the covering 
$$
\{\Lambda_1, \Lambda_2, \ldots ,\Lambda_{s - 1}, \Lambda'_s\}
$$
to replace $\Lambda_{s - 1}$ by a smaller lattice $\Lambda'_{s - 1}$. By iterating this process, we prove the existence of lattices $(\Lambda'_1, \dots, \Lambda'_s)$ with the following three properties
\begin{enumerate}
\item (inclusion) $\Lambda'_i \subseteq \Lambda_i$ for $1 \leq i \leq s$,
\item (covering) $\bigcup_{1 \leq i \leq s} \Lambda'_i = \Z^2$,
\item (partial minimality) let $1 \leq k \leq s $ and let $M_k$ be a lattice such that $M_k \subseteq \Lambda'_k$ and such that 
\begin{equation}
\label{minimality}
\Lambda_1 \cup \dots \cup \Lambda_{k - 1} \cup M_k \cup \Lambda'_{k + 1} \cup \dots \cup \Lambda'_s = \Z^2,
\end{equation}
then $M_k= \Lambda'_k$. 
\end{enumerate}

We now prove that $(\Lambda'_1, \ldots, \Lambda'_s)$ is a minimal covering associated with $\{\Lambda_1, \ldots, \Lambda_s\}$. So we consider lattices $M_i$ such that $M_i \subseteq \Lambda'_i$ and such that 
\begin{equation}
\label{4577}
\Z^2 = M_1 \cup \cdots \cup M_s.
\end{equation}
Our aim is to show that $M_i= \Lambda'_i$. The equality \eqref{4577} implies
$$
\Z^2 = \Lambda_1 \cup \cdots \cup \Lambda_{s -1} \cup M_s.
$$
Therefore we deduce from the partial minimality property (see \eqref{minimality}) that $M_s = \Lambda'_s$. We now have the equality
$$
\Z^2 = M_1 \cup M_2 \cup \cdots \cup M_{s -1} \cup \Lambda'_s,
$$
which implies
$$
\Z^2 = \Lambda_1 \cup \Lambda_2 \cup \cdots \cup \Lambda_{s -2}\cup M_{s - 1} \cup \Lambda'_s.
$$
From the partial minimality property (see \eqref{minimality}) we obtain the equality $M_{s - 1} = \Lambda'_{s -1}$. The end of the proof is by induction.
\end{proof}

\subsection{Minimal coverings with length at most $4$} 
We now list all the minimal coverings with length bounded by $4$.

\begin{theorem}
\label{listofcoverings}
The following is a complete list (up to permutation) of the minimal coverings of $\Z^2$ of length at most four
\begin{align}
\label{ec1}
\left\{
\begin{bmatrix}
1 & 0 \\
0 & 1
\end{bmatrix}
\right\}
\end{align}
\begin{align}
\label{ec2}
\left\{
\begin{bmatrix}
1 & 0 \\
0 & 2
\end{bmatrix},
\begin{bmatrix}
2 & 0 \\
0 & 1
\end{bmatrix},
\begin{bmatrix}
1 & 0 \\
1 & 2
\end{bmatrix}
\right\}
\end{align}
\begin{align}
\label{ec3}
\left\{
\begin{bmatrix}
1 & 0 \\ 
0 & 2
\end{bmatrix},
\begin{bmatrix}
4 & 0 \\ 
0 & 1
\end{bmatrix},
\begin{bmatrix}
1 & 0 \\ 
1 & 2
\end{bmatrix},
\begin{bmatrix}
2 & 0 \\ 
1 & 2
\end{bmatrix}
\right\}
\end{align}
\begin{align}
\label{ec4}
\left\{
\begin{bmatrix}
1 & 0 \\ 
0 & 4
\end{bmatrix},
\begin{bmatrix}
2 & 0 \\ 
0 & 1
\end{bmatrix},
\begin{bmatrix}
1 & 0 \\ 
1& 2
\end{bmatrix},
\begin{bmatrix}
1 & 0 \\ 
2 & 4
\end{bmatrix}
\right\}
\end{align}
\begin{align}
\label{ec5}
\left\{
\begin{bmatrix}
1 & 0 \\ 
0 & 2 
\end{bmatrix},
\begin{bmatrix}
2 & 0 \\ 
0 & 1
\end{bmatrix},
\begin{bmatrix}
1 & 0 \\ 
1 & 4
\end{bmatrix},
\begin{bmatrix}
1 & 0 \\ 
3 & 4 
\end{bmatrix}
\right\}
\end{align}
\begin{align}
\label{ec6}
\left\{
\begin{bmatrix}
1 & 0 \\
0 & 3
\end{bmatrix},
\begin{bmatrix}
3 & 0 \\ 
0 & 1
\end{bmatrix},
\begin{bmatrix}
1 & 0 \\ 1 & 3
\end{bmatrix},
\begin{bmatrix}
1 & 0 \\ 
2 & 3
\end{bmatrix}
\right\}.
\end{align}
\end{theorem}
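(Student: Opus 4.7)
The plan is to recast the problem as classifying irredundant covers of finite abelian groups on at most two generators by proper subgroups with trivial intersection. Given a minimal covering $\{\Lambda_1,\dots,\Lambda_k\}$ of $\Z^2$ with $k\geq 2$, each $\Lambda_i$ is proper; set $L := \bigcap_i \Lambda_i$, which is a finite-index sublattice, and work in $G := \Z^2/L$. The images $\bar\Lambda_i := \Lambda_i/L$ then form an irredundant cover of $G$ by proper subgroups with trivial intersection (the latter being forced by the definition of $L$), and conversely any such cover of $G$ lifts back to a minimal cover of $\Z^2$.

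The low-length cases are handled quickly. For $k=1$ only $\Lambda_1=\Z^2$ works, giving \eqref{ec1}. For $k=2$, if both $\Lambda_i$ are proper and cover $\Z^2$, picking $v_i \in \Lambda_i\setminus\Lambda_{3-i}$ and asking where $v_1+v_2$ lies yields a contradiction in either piece, so no length-$2$ minimal cover exists. For $k=3$, Scorza's classical theorem (any group covered by three proper subgroups admits a normal subgroup with quotient $(\Z/2)^2$, each covering subgroup being the preimage of an index-$2$ subgroup) combined with $\bigcap \bar\Lambda_i = 0$ forces $G\cong(\Z/2)^2$. Hence $L=2\Z^2$, and the three $\Lambda_i$ must be the three index-$2$ sublattices of $\Z^2$, yielding \eqref{ec2}.

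The main work is the length-$4$ case. First I would use the inequality $\sum_i[\Z^2:\Lambda_i]^{-1}\geq 1$ (coming from $\sum |\bar\Lambda_i|\geq|G|$) to force at least one small index, then iteratively bound the others using minimality: each $\bar\Lambda_i$ must contain an element missed by the other three, which constrains it relative to the rest. Next I would restrict the primes dividing $|G|$: for any prime $p\geq 5$ a Sylow-type projection would demand covering a two-generated abelian $p$-group by $\leq 4$ proper subgroups, which is impossible since even $(\Z/p)^2$ needs its $p+1>4$ lines to be covered. Mixed $\{2,3\}$-groups of order exceeding the candidates below can then be ruled out by Sylow decomposition together with direct intersection checks, reducing the candidate quotients $G$ to $(\Z/3)^2$ and $\Z/4\times\Z/2$.

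For $G\cong(\Z/3)^2$ the four lines through the origin over $\FF_3$ form the unique minimal $4$-cover by proper subgroups with trivial intersection, pulling back to the lattice configuration \eqref{ec6}. For $G\cong\Z/4\times\Z/2$ one enumerates the three subgroups of each of orders $2$ and $4$ and checks which $4$-element subcollections form an irredundant cover with trivial intersection; exactly three configurations arise, and after translating back via the various index-$8$ sublattices $L\subseteq\Z^2$ producing this quotient, they match \eqref{ec3}, \eqref{ec4}, \eqref{ec5}. The main obstacle is this final enumeration: ruling out every remaining candidate abelian quotient, correctly identifying the three minimal covers of $\Z/4\times\Z/2$, and verifying that their different realizations as sublattices of $\Z^2$ give exactly the stated Hermite representatives.
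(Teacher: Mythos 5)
Your reduction to irredundant covers of the finite quotient $G=\Z^2/\bigcap_i\Lambda_i$ by proper subgroups with trivial intersection is a genuinely different route from the paper's, which instead runs a direct combinatorial case analysis by tracking which lattice each of the small vectors $(1,0)^T$, $(0,1)^T$, $(1,1)^T$, $(1,-1)^T,\dots$ must lie in. The group-theoretic route is attractive (it explains \emph{why} the answers are governed by $(\Z/3)^2$ and $\Z/4\times\Z/2$, and it scales more conceptually to longer coverings), and your treatment of lengths $1$, $2$, $3$ is fine. But as written the length-$4$ case has concrete gaps.

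First, the step confining $G$ to $(\Z/3)^2$ and $\Z/4\times\Z/2$ is not established. The Sylow argument excluding primes $p\geq 5$ fails as stated: the Sylow $p$-components of the covering subgroups need not be proper in $G_p$, so you do not directly obtain a cover of a $p$-group by at most four proper subgroups. What is actually needed is B.~H.~Neumann's lemma that in an irredundant cover by $n$ subgroups every index is at most $n$; then $G$ embeds into $\prod_i G/H_i$ with each factor of order at most $4$, which both kills primes $\geq 5$ and bounds the exponent. Your inequality $\sum_i[\Z^2:\Lambda_i]^{-1}\geq 1$ only controls one index. Even granting this, the surviving $2$-generated candidates are not only ``mixed $\{2,3\}$-groups'': $(\Z/4)^2$ passes all the numerical constraints and needs a separate argument (every irredundant $4$-cover of it has $(2,2)$ in its intersection). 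Second, your count for $\Z/4\times\Z/2$ is wrong: there is exactly \emph{one} irredundant $4$-cover with trivial intersection, namely the two cyclic subgroups of order $4$ together with the two order-$2$ subgroups not contained in them; the three coverings \eqref{ec3}, \eqref{ec4}, \eqref{ec5} arise not from three abstract configurations but from the three index-$8$ sublattices $L\subseteq\Z^2$ with $\Z^2/L\cong\Z/4\times\Z/2$. Third, minimality in the paper's sense is stronger than irredundancy (it requires that the points covered \emph{only} by $\Lambda_i$ generate $\Lambda_i$), so the claimed converse of your dictionary is not automatic and must be verified for the final list. None of these defects is fatal to the strategy, but each is a genuine missing step rather than routine bookkeeping.
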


\begin{proof} 
By \cite[Lemma 6.6 \& 6.7]{FKC3}, there is no minimal covering with length two and only one with length three. So we are left with finding all the minimal coverings with length four.

\vskip .3cm \noindent $\bullet$ {\it Checking that the sets of lattices \eqref{ec3}, \eqref{ec4}, \eqref{ec5} and \eqref{ec6} are coverings.} 
One way to prove this is to give explicit equations of these lattices. Let us focus on \eqref{ec5} since the other cases are similar. The equations of the four corresponding lattices are respectively
$$
y \equiv 0 \bmod 2, \, x\equiv 0 \bmod 2, \, 3x+y \equiv 0 \bmod 4 , \, x+y \equiv 0 \bmod 4.
$$
It remains to check that any pair $(a, b)$ of congruence classes modulo $4$ satisfies at least one of the four equations above.

\noindent $\bullet$ {\it Construction of the minimal coverings.}
Let $L_1, \dots, L_4$ be such that
\[
L_1 \cup L_2 \cup L_3 \cup L_4 = \Z^2.
\]
Ruling out the trivial covering (\ref{ec1}), we may assume that $L_1$, $L_2$, $L_3$ and $L_4$ are all proper subgroups of $\Z^2$. The argument will proceed by repeatedly looking at points outside of $L_1 \cup L_2 \cup L_3 \cup L_4$ and then analyzing in which possible $L_i$ such a point can be. For convenience, these points will be chosen with small coordinates.

By permuting the $L_i$ if necessary, we may assume without loss of generality that
\[
\begin{pmatrix}
1 \\
0
\end{pmatrix}
\in L_1.
\]
Since $L_1 \neq \Z^2$, we see that $\begin{pmatrix} 0 \\ 1 \end{pmatrix} \not \in L_1$. By permuting the $L_i$ again if necessary, we may assume without loss of generality that
\[
\begin{pmatrix}
0 \\
1
\end{pmatrix}
\in L_2.
\]
Finally, a similar argument shows that we may assume without loss of generality that
\[
\begin{pmatrix}
1 \\
1
\end{pmatrix}
\in L_3.
\]
Summarizing, we have so far
\[
\begin{pmatrix}
1 \\
0
\end{pmatrix}
\in L_1, \quad
\begin{pmatrix}
0 \\
1
\end{pmatrix}
\in L_2, \quad
\begin{pmatrix}
1 \\
1
\end{pmatrix}
\in L_3.
\]
We will now consider the vector $(1, -1)^T$, and the proof naturally splits into two cases (case 1 and case 2). Note that we must have $(1, -1)^T \in L_3$ (case 1) or $(1, -1)^T \in L_4$ (case 2).

\subsection*{Case 1}
Let us assume that $(1, -1)^T \in L_3$. Then we have
\[
\begin{pmatrix}
1 \\
0
\end{pmatrix}
\in L_1, \quad
\begin{pmatrix}
0 \\
1
\end{pmatrix}
\in L_2, \quad
L_3 = 
\begin{bmatrix}
1 & 1 \\
1 & -1
\end{bmatrix}.
\]
Note that equality must indeed hold for $L_3$, since the subgroup
\[
\begin{bmatrix}
1 & 1 \\
1 & -1
\end{bmatrix}
\]
has prime index in $\Z^2$ and $L_3 \neq \Z^2$ by assumption. For the rest of the proof, we will often use the above observation implicitly. Looking at the vector $(2, 1)^T$, we get two further cases, namely $(2, 1)^T \in L_2$ (case 1.1) or $(2, 1)^T \in L_4$ (case 1.2), since the cases $(2, 1)^T \in L_1$ or $(2, 1)^T \in L_3$ are impossible (because this forces $L_1 = \Z^2$ respectively $L_3 = \Z^2$).

\subsection*{Case 1.1}
In this case we have $(2, 1)^T \in L_2$ and therefore
\[
\begin{pmatrix}
1 \\
0
\end{pmatrix}
\in L_1, \quad
L_2 =
\begin{bmatrix}
0 & 2 \\
1 & 1
\end{bmatrix}, \quad
L_3 = 
\begin{bmatrix}
1 & 1 \\
1 & -1
\end{bmatrix}.
\]
Considering the vector $(1, 2)^T$ yields two subcases: $(1, 2)^T \in L_1$ (case 1.1.1) or $(1, 2)^T \in L_4$ (case 1.1.2).

\subsection*{Case 1.1.1}
We have
\[
L_1 = 
\begin{bmatrix}
1 & 1 \\
0 & 2
\end{bmatrix}, \quad
L_2 =
\begin{bmatrix}
0 & 2 \\
1 & 1
\end{bmatrix}, \quad
L_3 = 
\begin{bmatrix}
1 & 1 \\
1 & -1
\end{bmatrix},
\]
which corresponds to the covering (\ref{ec2}). 

\subsection*{Case 1.1.2}
Currently, we know that
\[
\begin{pmatrix}
1 \\
0
\end{pmatrix}
\in L_1, \quad
L_2 =
\begin{bmatrix}
0 & 2 \\
1 & 1
\end{bmatrix}, \quad
L_3 = 
\begin{bmatrix}
1 & 1 \\
1 & -1
\end{bmatrix}, \quad
\begin{pmatrix}
1 \\
2
\end{pmatrix} 
\in L_4.
\]
We analyze the possibilities for the vector $(1, -2)^T$. If we add this vector to $L_1$, then the resulting covering is not minimal, as $L_4$ may be removed to obtain the covering (\ref{ec2}). Therefore we arrive at the covering
\[
\begin{pmatrix}
1 \\
0
\end{pmatrix}
\in L_1, \quad
L_2 =
\begin{bmatrix}
0 & 2 \\
1 & 1
\end{bmatrix}, \quad
L_3 = 
\begin{bmatrix}
1 & 1 \\
1 & -1
\end{bmatrix}, \quad
\begin{bmatrix}
1 & 1 \\
2 & -2
\end{bmatrix} \subseteq L_4.
\]
To finish the argument for this case, we consider the vector $(1, 4)^T$. If we add it to $L_4$, then the corresponding covering is not minimal, as $L_1$ may be removed to get the covering (\ref{ec2}). Thus we obtain
\[
L_1 = 
\begin{bmatrix}
1 & 1 \\
0 & 4
\end{bmatrix}, \quad
L_2 =
\begin{bmatrix}
0 & 2 \\
1 & 1
\end{bmatrix}, \quad
L_3 = 
\begin{bmatrix}
1 & 1 \\
1 & -1
\end{bmatrix}, \quad
L_4 = 
\begin{bmatrix}
1 & 1 \\
2 & -2
\end{bmatrix},
\]
which is the covering (\ref{ec4}). 

\subsection*{Case 1.2}
At this point we have
\[
\begin{pmatrix}
1 \\
0
\end{pmatrix}
\in L_1, \quad
\begin{pmatrix}
0 \\
1
\end{pmatrix}
\in L_2, \quad
L_3 = 
\begin{bmatrix}
1 & 1 \\
1 & -1
\end{bmatrix}, \quad
\begin{pmatrix}
2 \\
1
\end{pmatrix}
\in L_4.
\]
We have that $(-2, 1)^T \in L_2$ (case 1.2.1) or $(-2, 1)^T \in L_4$ (case 1.2.2).

\subsection*{Case 1.2.1}
The following information is available to us
\[
\begin{pmatrix}
1 \\
0
\end{pmatrix}
\in L_1, \quad
L_2 = 
\begin{bmatrix}
0 & -2 \\
1 & 1
\end{bmatrix} \quad
L_3 = 
\begin{bmatrix}
1 & 1 \\
1 & -1
\end{bmatrix}, \quad
\begin{pmatrix}
2 \\
1
\end{pmatrix}
\in L_4.
\]
Since the vectors $(2, 1)^T, (1, -2)^T, (1, 2)^T$ together generate $\Z^2$, at least one of $(1, -2)^T$ or $(1, 2)^T$ must be in $L_1$. Thus we get the covering (\ref{ec2}) after removing $L_4$.

\subsection*{Case 1.2.2}
We obtain that
\[
\begin{pmatrix}
1 \\
0
\end{pmatrix}
\in L_1, \quad
\begin{pmatrix}
0 \\
1
\end{pmatrix}
\in L_2, \quad
L_3 = 
\begin{bmatrix}
1 & 1 \\
1 & -1
\end{bmatrix}, \quad
\begin{bmatrix}
2 & - 2 \\
1 & 1
\end{bmatrix}
\subseteq L_4.
\]
We must have that $(1, -2)^T \in L_1$. We will now consider the vector $(4, 1)^T$. We either have $(4, 1)^T \in L_2$, in which case we have the covering (\ref{ec3}), or $(4, 1) \in L_4$, in which case we get the covering (\ref{ec2}) after removing $L_2$. 

\subsection*{Case 2}
We will now assume that $(1, -1)^T \in L_4$. Then we are in the situation
\[
\begin{pmatrix}
1 \\
0
\end{pmatrix}
\in L_1, \quad
\begin{pmatrix}
0 \\
1
\end{pmatrix}
\in L_2, \quad
\begin{pmatrix}
1 \\
1
\end{pmatrix}
\in L_3, \quad
\begin{pmatrix}
1 \\
-1
\end{pmatrix}
\in L_4.
\]
Inspecting the possibilities for the point $(2, 1)^T$, we come to the conclusion that $(2, 1)^T \in L_2$ (case 2.1) or $(2, 1)^T \in L_4$ (case 2.2). 

\subsection*{Case 2.1}
We have arrived at the following configuration
\[
\begin{pmatrix}
1 \\
0
\end{pmatrix}
\in L_1, \quad
L_2 = 
\begin{bmatrix}
0 & 2 \\
1 & 1
\end{bmatrix}, \quad
\begin{pmatrix}
1 \\
1
\end{pmatrix}
\in L_3, \quad
\begin{pmatrix}
1 \\
-1
\end{pmatrix}
\in L_4.
\]
Considering the vector $(1, 2)^T$, we will split into the cases $(1, 2)^T \in L_1$ (case 2.1.1) or $(1, 2)^T \in L_4$ (case 2.1.2). 

\subsection*{Case 2.1.1}
Gathering the information so far, we have
\[
L_1 = 
\begin{bmatrix}
1 & 1 \\
0 & 2
\end{bmatrix}, \quad
L_2 = 
\begin{bmatrix}
0 & 2 \\
1 & 1
\end{bmatrix}, \quad
\begin{pmatrix}
1 \\
1
\end{pmatrix}
\in L_3, \quad
\begin{pmatrix}
1 \\
-1
\end{pmatrix}
\in L_4.
\]
We inspect the different locations for the vector $(3, 1)^T$. If $(3, 1)^T \in L_3$, we obtain the cover (\ref{ec2}) after dropping the lattice $L_4$. Suppose instead that $(3, 1)^T \in L_4$. Then we have
\[
L_1 = 
\begin{bmatrix}
1 & 1 \\
0 & 2
\end{bmatrix}, \quad
L_2 = 
\begin{bmatrix}
0 & 2 \\
1 & 1
\end{bmatrix}, \quad
\begin{pmatrix}
1 \\
1
\end{pmatrix}
\in L_3, \quad
\begin{bmatrix}
1 & 3 \\
-1 & 1
\end{bmatrix}
\subseteq L_4.
\]
At last we consider the vector $(3, -1)^T$. If $(3, -1)^T \in L_3$, we get
\[
L_1 = 
\begin{bmatrix}
1 & 1 \\
0 & 2
\end{bmatrix}, \quad
L_2 = 
\begin{bmatrix}
0 & 2 \\
1 & 1
\end{bmatrix}, \quad
L_3 = 
\begin{bmatrix}
1 & 3 \\
1 & -1
\end{bmatrix}, \quad
L_4 = 
\begin{bmatrix}
1 & 3 \\
-1 & 1
\end{bmatrix},
\]
which corresponds to the covering (\ref{ec5}). If instead $(3, -1)^T \in L_4$, we get the covering (\ref{ec2}) upon removing $L_3$. 

\subsection*{Case 2.1.2}
At this stage we may write
\[
\begin{pmatrix}
1 \\
0
\end{pmatrix}
\in L_1, \quad
L_2 = 
\begin{bmatrix}
0 & 2 \\
1 & 1
\end{bmatrix}, \quad
\begin{pmatrix}
1 \\
1
\end{pmatrix}
\in L_3, \quad
\begin{bmatrix}
1 & 1 \\
-1 & 2
\end{bmatrix}
\in L_4.
\]
Looking at $(3, 1)^T$, we obtain $(3, 1)^T \in L_3$. Then looking at $(3, 2)^T$, we conclude that $(3, 2)^T \in L_1$. Once we discard $L_4$, we get the covering (\ref{ec2}).

\subsection*{Case 2.2}
We know that
\[
\begin{pmatrix}
1 \\
0
\end{pmatrix}
\in L_1, \quad
\begin{pmatrix}
0 \\
1
\end{pmatrix}
\in L_2, \quad
\begin{pmatrix}
1 \\
1
\end{pmatrix}
\in L_3, \quad
L_4 = 
\begin{bmatrix}
1 & 2 \\
-1 & 1
\end{bmatrix}.
\]
Considering the vector $(3, 1)^T$, the argument splits in two cases, namely $(3, 1)^T \in L_2$ (case 2.2.1) and $(3, 1)^T \in L_3$ (case 2.2.2). 

\subsection*{Case 2.2.1}
We have arrived at
\[
\begin{pmatrix}
1 \\
0
\end{pmatrix}
\in L_1, \quad
L_2 = 
\begin{bmatrix}
0 & 3 \\
1 & 1
\end{bmatrix}, \quad
\begin{pmatrix}
1 \\
1
\end{pmatrix}
\in L_3, \quad
L_4 = 
\begin{bmatrix}
1 & 2 \\
-1 & 1
\end{bmatrix}.
\]
This forces $(4, 1)^T \in L_3$ and then $(1, 3)^T \in L_1$, thus giving
\[
L_1 = 
\begin{bmatrix}
1 & 1 \\
0 & 3
\end{bmatrix}, \quad
L_2 = 
\begin{bmatrix}
0 & 3 \\
1 & 1
\end{bmatrix}, \quad
L_3 = 
\begin{bmatrix}
1 & 4 \\
1 & 1
\end{bmatrix}, \quad
L_4 = 
\begin{bmatrix}
1 & 2 \\
-1 & 1
\end{bmatrix}.
\]
This is precisely the covering (\ref{ec6}). 

\subsection*{Case 2.2.2}
Finally, we have to consider the configuration
\[
\begin{pmatrix}
1 \\
0
\end{pmatrix}
\in L_1, \quad
\begin{pmatrix}
0 \\
1
\end{pmatrix}
\in L_2, \quad
L_3 = 
\begin{bmatrix}
1 & 3 \\
1 & 1
\end{bmatrix}, \quad
L_4 = 
\begin{bmatrix}
1 & 2 \\
-1 & 1
\end{bmatrix}.
\]
Considering the vectors $(2, 3)^T$ and $(3, 2)^T$ simultaneously, this gives two cases
\[
L_1 =
\begin{bmatrix}
1 & 3 \\
0 & 2
\end{bmatrix}, \quad
L_2 = 
\begin{bmatrix}
0 & 2 \\
1 & 3
\end{bmatrix}, \quad
L_3 = 
\begin{bmatrix}
1 & 3 \\
1 & 1
\end{bmatrix}, \quad
L_4 = 
\begin{bmatrix}
1 & 2 \\
-1 & 1
\end{bmatrix}
\]
and
\[
L_1 = 
\begin{bmatrix}
1 & 2 \\
0 & 3
\end{bmatrix}, \quad
L_2 = 
\begin{bmatrix}
0 & 3 \\
1 & 2
\end{bmatrix}, \quad
L_3 = 
\begin{bmatrix}
1 & 3 \\
1 & 1
\end{bmatrix}, \quad
L_4 = 
\begin{bmatrix}
1 & 2 \\
-1 & 1
\end{bmatrix}.
\]
In the first case, the lattice $L_4$ is redundant, and we get the covering (\ref{ec2}) upon discarding $L_4$. The last case does not correspond to any non-trivial minimal covering. Indeed, all the $L_i$ have prime index, but their union does not contain the point $(1, 4)^T$. 
\end{proof}

Theoretically speaking, the method employed above leads to the complete list of minimal coverings with length $\leq k$, where $k$ is a given integer. The case $k \leq 6$ is vital for our work \cite{FKD3D6}, which handles the case where the automorphism group is conjugate to ${\bf D}_3$ or ${\bf D}_6$. However, when $k = 5$ and $k = 6$, the number of cases is huge and the method is no longer feasible to execute by hand. This is the reason why we have written algorithms to produce the list of 9 minimal coverings with length equal to 5, and the list of 49 minimal coverings with length equal to 6. To make this paper independent of computer calculations, we have decided to prove Theorem \ref{listofcoverings} by hand, but the results of Theorem \ref{listofcoverings} are rapidly reproduced by our algorithms that will be published in \cite{FKD3D6}.

\section{Proof of Theorem \ref{centralD4}}
\subsection{Preparation of the covering}
The proof of this theorem is accomplished by contradiction by exploiting Proposition \ref{essential}. We start from a pair of extraordinary forms $(F_1, F_2) $ such that ${\rm Aut}(F_1, \Q) \simeq_{{\rm GL}(2, \Q)} {\bf D}_4$. By Comment 2 of \ref{listofcomments} we also have ${\rm Aut}(F_2, \Q) \simeq_{{\rm GL}(2, \Q)} {\bf D}_4$. By item {\it 2.} of Proposition \ref{essential}, we also have
$$
{\rm Aut}(G_1, \Q), \, {\rm Aut}( G_2, \Q) \simeq_{{\rm GL}(2, \Q)} {\bf D}_4
$$ 
thanks to the conjugation formula
\begin{equation}
\label{conjugationformula}
{\rm Aut}(F \circ \lambda, \Q) = \lambda^{-1} {\rm Aut}(F, \Q) \, \lambda, \text{ for all } \lambda \in {\rm GL}(2, \Q) \text{
and for all } F \in {\rm Bin}(d, \Q).
\end{equation}
Recall that 
$$
G_1 \circ \gamma^{-1} = G_2, 
$$
where $\gamma$ is defined in \eqref{defgamma}. We write ${\bf D}_4$ explicitly as 
$$
{\bf D} _4 = \{{\rm id},  A_1, A_2, A_3, -{\rm id}, -A_1, -A_2, -A_3\},
$$
with 
$$
A_1 = \begin{pmatrix} 0 & 1 \\ -1 & 0\end{pmatrix}, A_2 = \begin{pmatrix} 0 & 1 \\ 1 & 0 \end{pmatrix}, \text{ and } A_3 = \begin{pmatrix} 1 & 0 \\ 0 & -1 \end{pmatrix}. 
$$
So we have the equality 
$$
{\rm Aut}(G_2, \Q) = T_2^{-1} {\bf D}_4 T_2,
$$
where $T_2$ is some matrix of ${\rm GL}(2, \Q)$ that we write as
$$
T_2 = \begin{pmatrix} t_1& t_2 \\ t_3 & t_4 \end{pmatrix},
$$
where the $t_i$ are coprime integers. An important integer is
\begin{equation}
\label{defd2}
d_2 := \vert \det T_2 \vert = \vert t_1t_4  - t_2t_3 \vert.
\end{equation}
Thus we split ${\rm Aut}(G_2, \Q)$ into two disjoint sets
$$
{\rm Aut}(G_2, \Q) = \mathfrak S \cup (-\mathfrak S),
$$
with
$$
\mathfrak S := \{{\rm id}, T_2^{-1} A_1T_2, T_2^{-1} A_2T_2, T_2^{-1} A_3T_2\}.
$$
Furthermore, we have ${\rm Isom}(G_1 \rightarrow G_2, \Q) = \gamma^{-1} {\rm Aut}(G_2, \Q)$, by Definition \ref{Def2.2} and we put
\begin{equation}
\label{Lambda(sigma)=}
\Lambda(\sigma) := \{{\bf x} \in \Z^2 : \gamma^{-1} \sigma ({\bf x}) \in \Z^2\} = L(\gamma^{-1} \sigma),
\end{equation}
see Definition \ref{defL}. The equality $\Lambda(-\sigma) = \Lambda(\sigma)$ follows from \eqref{obvious0}. Combining these remarks with the first equality of item {\it 5.} of Proposition \ref{essential}, we have the covering of $\Z^2$
\begin{equation}
\label{eD4cover}
\Z^2 = \bigcup_{\sigma \in \mathfrak S} \Lambda(\sigma)
\end{equation}
by at most four lattices. We will require the explicit equations defining the $\Lambda(\sigma)$. By a direct computation we have

\begin{lemma}
\label{lemma4.1} 
For $\sigma \in \mathfrak S$, the lattice $\Lambda(\sigma)$ is the set of $(x_1, x_2) \in \Z^2$ such that
\begin{equation}
\label{IdD4}
\Lambda({\rm id}):
\begin{cases}
x_1 & \equiv 0 \bmod D\\
\nu x_2 & \equiv 0 \bmod D
\end{cases}
\end{equation}

\begin{equation}
\label{A1}
\Lambda(T_2^{-1}A_1T_2):
\begin{cases}
(t_1t_2+t_3t_4)x_1 + (t_2^2+t_4^2) x_2 & \equiv 0 \bmod d_2D\\
\nu ( (t_1^2+t_3^2)x_1 +(t_1t_2 +t_3t_4)x_2) & \equiv 0 \bmod d_2 D
\end{cases}
\end{equation}

\begin{equation}
\label{A2}
\Lambda(T_2^{-1} A_2 T_2):
\begin{cases}
(t_3t_4-t_1t_2)x_1 +(t_4^2-t_2^2) x_2& \equiv 0 \bmod d_2D\\
\nu ( (t_1^2-t_3^2)x_1 + (t_1t_2-t_3t_4) x_2)& \equiv 0 \bmod d_2 D
\end{cases}
\end{equation}

\begin{equation}
\label{A3}
\Lambda(T_2^{-1} A_3 T_2):
\begin{cases}
(t_2t_3+t_1t_4)x_1 + (2t_2t_4) x_2& \equiv 0 \bmod d_2D\\
\nu ( (2t_1t_3) x_1 + (t_2t_3 +t_1t_4)x_2) & \equiv 0 \bmod d_2D.
\end{cases}
\end{equation}
\end{lemma}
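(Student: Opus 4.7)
The proof is an entirely mechanical matrix computation, so the plan is really just to lay out the algebra cleanly. Since $\gamma = \mathrm{diag}(D, D/\nu)$, we have $\gamma^{-1} = \mathrm{diag}(1/D, \nu/D)$, and the defining condition $\gamma^{-1}\sigma(\mathbf{x}) \in \Z^2$ simply says that the two components of $\gamma^{-1}\sigma(\mathbf{x})$ are integers. For $\sigma = \mathrm{id}$ this reads immediately as $x_1/D \in \Z$ and $\nu x_2/D \in \Z$, giving \eqref{IdD4}.

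For $\sigma = T_2^{-1} A_i T_2$, $i \in \{1,2,3\}$, the plan is to compute $M_i := T_2^{-1} A_i T_2$ explicitly using
$$
T_2^{-1} = \frac{1}{\det T_2} \begin{pmatrix} t_4 & -t_2 \\ -t_3 & t_1 \end{pmatrix},
$$
and then form $\gamma^{-1} M_i$. Each entry of $M_i$ is a quadratic polynomial in $t_1,t_2,t_3,t_4$ divided by $\det T_2$, and one simply carries out the two matrix products $T_2^{-1} (A_i T_2)$ for each of the three matrices $A_1, A_2, A_3$. After left-multiplying by $\gamma^{-1}$, the common denominator of the resulting $2\times 2$ matrix becomes $D\det T_2$; since $d_2 = |\det T_2|$ and the sign is irrelevant for divisibility, the condition that $\gamma^{-1} M_i (\mathbf{x}) \in \Z^2$ is precisely a pair of congruences modulo $d_2 D$, which are exactly \eqref{A1}, \eqref{A2}, \eqref{A3}.

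There is no genuine obstacle — the only thing to watch is signs and the factor of $\nu$ appearing in the second row from $\gamma^{-1}$. To keep the writing compact, I would present the identity case in one line, then state a single intermediate formula
$$
T_2^{-1} A_i T_2 = \frac{1}{\det T_2} \begin{pmatrix} \ast & \ast \\ \ast & \ast \end{pmatrix}
$$
for each $i$, and in each case extract the two congruences by multiplying the top row by $D\det T_2 \cdot \gamma^{-1}_{11}$-type factors (trivially $D$) and the bottom row by $D\det T_2 / \nu$ (so that the factor $\nu$ appears in front). No further lemma is needed beyond the definition of $L(\cdot)$ in Definition \ref{defL} and the observation \eqref{obvious0} used earlier.
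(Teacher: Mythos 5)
Your proposal is correct and is exactly the "direct computation" the paper invokes (it gives no further detail): computing $T_2^{-1}A_iT_2$ via the adjugate formula, left-multiplying by $\gamma^{-1}=\mathrm{diag}(1/D,\nu/D)$, and clearing the denominator $D\det T_2$ (with $d_2=|\det T_2|$ making the sign irrelevant) reproduces each pair of congruences, as I have verified entry by entry. No gaps.
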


\noindent By \eqref{conditionsforDandnu}, the second equation of \eqref{IdD4} is redundant and thus we have the equality
\begin{equation}
\label{valueofindex}
[\Z^2 : \Lambda({\rm id})] = D.
\end{equation}
Several times we will use the following important lemma.

\begin{lemma}
\label{nondegenerate} 
We adopt the hypotheses of Proposition \ref{essential} and the notations above. Then the covering \eqref{eD4cover} satisfies
\begin{equation}
\label{1479}
\Lambda(\sigma) \neq \Z^2 \textup{ for all } \sigma \in \mathfrak S.
\end{equation}
\end{lemma}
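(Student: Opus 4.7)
The plan is to argue by contradiction: suppose that $\Lambda(\sigma) = \Z^2$ for some $\sigma \in \mathfrak{S}$, and derive that $G_1$ and $G_2$ are ${\rm GL}(2, \Z)$--equivalent, contradicting the fact that the pair $(G_1, G_2)$ is extraordinary (item {\it 2.} of Proposition \ref{essential}).

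The first step is to translate the hypothesis using the definition of $\Lambda(\sigma)$. By \eqref{Lambda(sigma)=} together with the remark \eqref{obvious}, the equality $\Lambda(\sigma) = \Z^2$ is equivalent to $\gamma^{-1}\sigma$ having integer coefficients. I would then compute its determinant. Every element of $\mathfrak{S}$ is $T_2$--conjugate to an element of ${\bf D}_4 \subset {\rm GL}(2,\Z)$, so $|\det \sigma| = 1$; combined with $\det \gamma = D^2/\nu$, this gives $|\det(\gamma^{-1}\sigma)| = \nu/D^2$. Since a matrix with integer coefficients has integer determinant, $\nu/D^2$ has to be a positive integer. Invoking the inequality $\nu \leq D^2$ from \eqref{conditionsforDandnu}, this forces $\nu = D^2$, and consequently $|\det(\gamma^{-1}\sigma)| = 1$, so that $\gamma^{-1}\sigma \in {\rm GL}(2, \Z)$.

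To conclude, I would use that $G_1 = G_2 \circ \gamma$ implies $G_1 \circ \gamma^{-1} = G_2$, while $\sigma \in {\rm Aut}(G_2, \Q)$ yields $G_2 \circ \sigma = G_2$. Therefore
$$
G_1 \circ (\gamma^{-1}\sigma) = (G_1 \circ \gamma^{-1}) \circ \sigma = G_2 \circ \sigma = G_2,
$$
so $\gamma^{-1}\sigma \in {\rm GL}(2, \Z)$ exhibits a ${\rm GL}(2, \Z)$--equivalence between $G_1$ and $G_2$. This contradicts the fact that $(G_1, G_2)$ is an extraordinary pair, and the lemma follows.

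Rather than a major obstacle, the delicate ingredient here is just the observation that the sharp inequality $\nu \leq D^2$ (from item {\it 3.} of Proposition \ref{essential}) combines with the trivial determinant computation to force $\gamma^{-1}\sigma$ to be unimodular. In particular, no appeal to the explicit lattice equations of Lemma \ref{lemma4.1} is needed, and the minimality property \eqref{gammaminimal} does not enter the argument; both will become essential only in the subsequent analysis where one must exclude specific coverings from the list of Theorem \ref{listofcoverings}.
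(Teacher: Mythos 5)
Your proof is correct, but it reaches the contradiction by a genuinely different route than the paper. You exploit item \emph{3.} of Proposition \ref{essential}: from the explicit diagonal shape \eqref{defgamma} you compute $\vert\det(\gamma^{-1}\sigma)\vert = \nu/D^2$, observe that integrality of $\gamma^{-1}\sigma$ forces this to be a positive integer, and then the inequality $\nu \leq D^2$ pins down $\nu = D^2$, so that $\gamma^{-1}\sigma \in {\rm GL}(2,\Z)$ and $G_1 \circ (\gamma^{-1}\sigma) = G_2$ contradicts extraordinariness. The paper instead invokes item \emph{4.}, the minimality \eqref{gammaminimal}: from $1 = [\Z^2 : L(\gamma^{-1}\sigma)] \geq [\Z^2 : L(\gamma)] \geq 1$ it deduces via \eqref{obvious} that $\gamma$ itself is integral, and then composes the two integral isomorphisms $G_1 \circ M_1 = G_2$ and $G_2 \circ M_2 = G_1$ to get $G_1 \circ (M_1 M_2) = G_1$, forcing $\vert\det M_1\vert = \vert\det M_2\vert = 1$. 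Your argument is somewhat shorter and never needs the minimality property, but it leans on the particular normal form of $\gamma$ and the bound $\nu \leq D^2$; the paper's version only uses that $\gamma$ realizes the minimal index among all isomorphisms, which is the more portable hypothesis. Both are complete proofs of the lemma, and your closing remark correctly identifies which ingredients you did and did not use.
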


\begin{proof}
For the sake of contradiction, suppose that $\Lambda(\sigma) = \Z^2$ for some $\sigma \in \mathfrak S$. By the definition \eqref{Lambda(sigma)=} and the remark \eqref{obvious}, the matrix associated with $\gamma^{-1} \sigma$ (with $\sigma \in {\rm Aut}(G_2, \Q)$) has integer coefficients. This means that there exists a matrix $M_1$ with integer coefficients such that
\begin{equation}
\label{1564}
G_1 \circ M_1 = G_2.
\end{equation}
We use the minimality of the index of $L(\gamma)$ (see item {\it 4.} of Proposition \ref{essential}) to write 
$$
1 = [\Z^2 : \Lambda(\sigma)] = [\Z^2 : L(\gamma^{-1} \sigma)] \geq [\Z^2 : L( \gamma)] \geq 1.
$$
Again by the remark \eqref{obvious} we deduce that the matrix $M_2$ associated with $\gamma$ (see \eqref{defgamma}), has integer coefficients. So we have the equality
$$
G_2 \circ M_2 = G_1.
$$
Combining with \eqref{1564}, we obtain 
$$
G_1 \circ (M_1 \cdot M_2) = G_2 \circ M_2 = G_1,
$$
which implies that $\vert \det (M_1\cdot M_2)\vert =1$, hence $\vert \det M_1\vert = \vert \det M_2 \vert =1$. So the matrices $M_1$ and $M_2$ belong to ${\rm GL}(2, \Z)$. Therefore the equality \eqref{1564} shows that $G_1$ and $G_2$ are ${\rm GL}(2, \Z)$--equivalent, which is contrary to the hypothesis that $(G_1, G_2)$ is an extraordinary pair (see Comment 3 of \ref{listofcomments}). 
\end{proof}

We now initiate the proof of Theorem \ref{centralD4} by successively restricting the values of $D$. 

\subsection{ {The integer $D$ satisfies $2\leq D \leq 4$} } 
We want to prove the inequality
\begin{equation}
\label{1502}
2\leq D \leq 4.
\end{equation}
We separate our discussion into two subcases based on the value of the union of the lattices $\Lambda(\sigma)$ with $\sigma \neq {\rm id}$.

\subsubsection{ {If the last three lattices do not cover $\Z^2$} } 
We suppose that
\begin{equation}
\label{badunion}
\Z^2 \neq \bigcup_{\sigma \in \mathfrak S - \{{\rm id}\}} \Lambda(\sigma).
\end{equation}
This condition means that $\Lambda({\rm id})$ is essential to obtain the covering \eqref{eD4cover}. By Lemma \ref{nondegenerate} we know that none of the lattices appearing in this covering are trivial. By the existence of a minimal covering (see Lemma \ref{existenceminimal}), there exist four subgroups $\Lambda'_i \ (0 \leq i \leq 3)$ such that 
$$
\begin{cases}
\Lambda'_i \text{ is either } \{0\} \text{ or a proper lattice,} \\ 
\Lambda'_0 \subseteq \Lambda({\rm id}) \text { and } \Lambda'_i \subseteq \Lambda(T_2^{-1} A_i T_2) \ (1 \leq i \leq 3),\\
\cup_{0\leq i \leq 3} \Lambda'_i = \Z^2,\\
\mathcal C := \{\Lambda'_i : \Lambda'_i \neq \{0\}\} \text { is a minimal covering of } \Z^2.
\end{cases}
$$
In particular, by \eqref{eD4cover} and by \eqref{badunion}, we deduce that $\Lambda'_0 \neq \{0\}.$ Thus the covering $\mathcal C$ contains three or four lattices and $\mathcal C$ appears as one of the minimal coverings $\eqref{ec2}, \dots, \eqref{ec6}$ of Theorem \ref{listofcoverings}. Thus the lattice $\Lambda'_0$ is necessarily one of the lattices of these five minimal coverings. By \eqref{=det}, all these lattices have an index equal to $2$, $3$ or $4$. In particular, we have $[\Z^2 : \Lambda'_0] \in \{2, 3, 4\}$. Finally, $[\Z^2 : \Lambda({\rm id})]$ is a divisor (different from $1$) of $[\Z^2 : \Lambda'_0]$. By \eqref{valueofindex}, we deduce the inequality $2 \leq D \leq 4$, and \eqref{1502} is proved. 

\subsubsection{ {If the last three lattices cover $\Z^2$} } 
We now suppose that 
\begin{equation}
\label{eNonCover}
\Z^2 = \bigcup_{\sigma \in \mathfrak{S} - \{{\rm id}\}} \Lambda(\sigma).
\end{equation}
By \eqref{1479}, the equality \eqref{eNonCover} exhibits a non-trivial covering of $\Z^2$ by three lattices. By Theorem \ref{listofcoverings} and by Lemma \ref{existenceminimal} we have the equality 
\begin{equation}
\label{listof3Lambda}
\{\Lambda(T_2^{-1} A_1 T_2), \Lambda(T_2^{-1} A_2 T_2), \Lambda( T_2^{-1} A_3 T_2)\} = \{\Lambda_0, \Lambda_1, \Lambda_2\},
\end{equation}
where the covering \eqref{ec2} is written as $\{\Lambda_0, \Lambda_1, \Lambda_2\}$ respectively. 

\subsubsubsection{ {If \eqref{eNonCover} holds, then the integer $D$ is not divisible by an odd prime $p$} } 
\label{1631}
For the sake of contradiction, suppose that $D$ is divisible by some $p \geq 3$. We will show that there exists at least one $\sigma \in \mathfrak{S} - \{{\rm id}\}$ such that
\begin{equation}
\label{pdividesindex}
p \mid [\Z^2 : \Lambda(\sigma)].
\end{equation}
Such a divisibility is impossible since, on the right--hand side of \eqref{listof3Lambda}, all the lattices have an index equal to $2$. To prove \eqref{pdividesindex}, we will argue by contradiction. So we suppose that 
\begin{equation}
\label{4000} 
p \nmid [\Z^2 : \Lambda(\sigma)] \text{ for all } \sigma \neq {\rm id}. 
\end{equation}
By keeping only the first equation of the systems defining the lattices modulo $p$, we see that the lattices $\Lambda(T_2^{-1} A_1 T_2)$, $\Lambda(T_2^{-1} A_2 T_2)$ and $\Lambda(T_2^{-1} A_3 T_2)$ (see \eqref{A1}, \eqref{A2} and \eqref{A3}) are respectively included in the following lattices $\mathcal L_p$ 
\begin{align}
\mathcal L_p (T_2^{-1} A_1 T_2) &: (t_1t_2+t_3t_4) x_1 + (t_2^2 +t_4^2) x_2 \equiv 0 \bmod p, \label{DEF1}\\
\mathcal L_p (T_2^{-1} A_2 T_2) &: (t_3t_4-t_1t_2) x_1 + (t_4^2 -t_2^2) x_2 \equiv 0 \bmod p, \label{DEF2}\\
\mathcal L_p (T_2^{-1} A_3 T_2) &: (t_2t_3+t_1t_4) x_1 + (2t_2t_4) x_2 \equiv 0 \bmod p. \label{DEF3}
\end{align}
The index of these three lattices is $1$ or $p$. It can not be equal to $p$, otherwise we would have \eqref{pdividesindex} for some $\sigma$. So the index is always equal to $1$, which means that $p$ divides all the coefficients 
\begin{equation}
\label{listofcoef}
(t_1t_2+t_3t_4), \ (t_2^2+t_4^2), \ (t_3t_4-t_1t_2), \ (t_4^2-t_2^2), \ (t_2t_3+t_1t_4) \text{ and } (2t_2t_4).
\end{equation}
This implies that $p$ divides $t_2$ and $t_4$. Let $g := {\rm gcd}(t_2, t_4, p^\infty) \geq p, \tilde{t}_2 = t_2/g, \tilde{t}_4 = t_4/g$. We observe that $g$ divides $d_2$, by its definition \eqref{defd2}. We return to the original system of equations \eqref{A1}, \eqref{A2} and \eqref{A3}, where we keep the first equation of each system. After division by $g$, we observe that the lattices $\Lambda(T_2^{-1} A_1 T_2)$, $\Lambda(T_2^{-1} A_2 T_2)$ and $\Lambda(T_2^{-1} A_3 T_2)$ are respectively included in the following lattices
\begin{align}
\tilde {\mathcal L_p} (T_2^{-1} A_1 T_2) &: (t_1\tilde t_2+t_3\tilde t_4) x_1 \equiv 0 \bmod p, \label{Def1}\\
\tilde{\mathcal L_p }(T_2^{-1} A_2 T_2) &: (t_3\tilde t_4-t_1\tilde t_2) x_1 \equiv 0 \bmod p, \label{Def2}\\
\tilde {\mathcal L_p} (T_2^{-1} A_3 T_2) &: (\tilde t_2t_3+t_1\tilde t_4) x_1 \equiv 0 \bmod p .\label{Def3}
\end{align}
We observe that ${\rm gcd}(t_1, t_3, p) =1$ (otherwise the integers $t_i$ would not be coprime altogether), and that ${\rm gcd}(\tilde t_2, \tilde t_4, p) = 1$. 

-- Suppose that $p$ does not divide $(t_1\tilde t_2+t_3\tilde t_4)$, then the lattice $\tilde{\mathcal L_p}(T_2^{-1} A_1 T_2)$ has its index equal to $p$, and since this lattice contains $\Lambda(T_2^{-1} A_1 T_2)$, we obtain a contradiction with \eqref{4000}.
 
-- Same type of reasoning when $p$ does not divide $(t_3\tilde t_4-t_1\tilde t_2)$. 

-- Now suppose that $p$ divides $ (t_1\tilde t_2+t_3\tilde t_4)$ and $(t_3\tilde t_4-t_1\tilde t_2)$, then $p$ divides $t_1 \tilde t_2$ and $t_3 \tilde t_4$. The above coprimality conditions imply that $p$ does not divide the coefficient of $x_1$ in \eqref{Def3}, contradicting \eqref{4000}. 

So $D$ has no odd prime divisor when \eqref{eNonCover} holds. 

\subsubsubsection{ {If \eqref{eNonCover} holds, then the integer $D$ is not divisible by $8$} } 
The strategy is the same as in \S \ref{1631}. We suppose that $D$ is divisible by $8$. We will prove that there is a $\sigma \in \mathfrak{S} - \{{\rm id}\}$ such that
\begin{equation}
\label{4dividesindex}
4 \mid [\Z^2 : \Lambda(\sigma)].
\end{equation}
Such a divisibility contradicts the equality \eqref{listof3Lambda}, since all the lattices $\Lambda_i$ ($1 \leq i \leq 3$) have index equal to $2$. To prove \eqref{4dividesindex}, we will argue by contradiction. So we suppose that 
\begin{equation}
\label{4001} 
4 \nmid [\Z^2 : \Lambda(\sigma)] \text{ for all } \sigma \neq {\rm id}. 
\end{equation}
The lattices 
$\Lambda(T_2^{-1} A_1 T_2),$ $\Lambda(T_2^{-1} A_2 T_2)$ and $\Lambda(T_2^{-1} A_3 T_2)$ are respectively included in the following lattices
\begin{align*}
\mathcal L_8 (T_2^{-1} A_1 T_2) &: (t_1t_2+t_3t_4) x_1 + (t_2^2 +t_4^2) x_2 \equiv 0 \bmod 8, \\
\mathcal L_8 (T_2^{-1} A_2 T_2) &: (t_3t_4-t_1t_2) x_1 + (t_4^2 -t_2^2) x_2 \equiv 0 \bmod 8, \\
\mathcal L_8 (T_2^{-1} A_3 T_2) &: (t_2t_3+t_1t_4) x_1 + (2t_2t_4) x_2 \equiv 0 \bmod 8. 
\end{align*}
These lattices have an index equal to $1$, $2$, $4$ or $8$. It can not be divisible by $4$ (otherwise, we would contradict \eqref{4001}). So these indexes are equal to $1$ or $2$, which means that $4$ divides all the coefficients listed in \eqref{listofcoef}. This implies that $2$ divides $t_2$ and $t_4$. Let $g = (t_2, t_4, 2^\infty) \geq 2$, $\tilde t_2 = t_2/g$, $\tilde t_4 = t_4/g$. We return to the initial definitions of the lattices \eqref{A1}, \eqref{A2} and \eqref{A3}, where we only keep the first equation. Since $g$ divides $d_2$, we observe that the lattices $\Lambda(T_2^{-1} A_1 T_2)$, $\Lambda(T_2^{-1} A_2 T_2)$ and $\Lambda(T_2^{-1} A_3 T_2)$ are respectively included in the following lattices
\begin{align}
\tilde {\mathcal L_8} (T_2^{-1} A_1 T_2) &: (t_1\tilde t_2+t_3\tilde t_4) x_1 + g (\tilde t_2^2 +\tilde t_4^2) x_2\equiv 0 \bmod 8, \label{Def1*}\\
\tilde{\mathcal L_8 }(T_2^{-1} A_2 T_2) &: (t_3\tilde t_4-t_1\tilde t_2) x_1 + g (\tilde t_4^2 - \tilde t_2^2) x_2\equiv 0 \bmod 8, \label{Def2*}\\
\tilde {\mathcal L_8} (T_2^{-1} A_3 T_2) &: (\tilde t_2t_3+t_1\tilde t_4) x_1 +g (2\tilde t_2 \tilde t_4) x_2 \equiv 0 \bmod 8 .\label{Def3*}
\end{align}
We exploit the coprimality ${\rm gcd}(t_1, t_3, 2) = 1$ (otherwise, the integers $t_i$ would not be coprime altogether) and the coprimality ${\rm gcd} (\tilde t_2, \tilde t_4, 2) = 1$ to deduce that at least one of the three coefficients attached to $x_1$ in \eqref{Def1*}, \eqref{Def2*} and \eqref{Def3*} is not divisible by $4$. This implies that the corresponding lattice $\tilde{\mathcal L_8}(\sigma)$ has index equal to $4$ or $8$. Hence the associated lattice $\Lambda(\sigma)$ (contained in $\tilde{\mathcal L_8}(\sigma)$) has index divisible by $4$. This contradicts our assumption \eqref{4001}. 

We have considered all the possible cases. The proof of \eqref{1502} is complete. 

\subsection{ {The integer $D$ is different from $3$} } 
Our task is to prove that 
\begin{equation}
\label{Ddifferentfrom3}
D \neq 3.
\end{equation}
We will now assume that $D = 3$ to derive a contradiction. We already know that $[\Z^2 : \Lambda({\rm id})] = D = 3$ by \eqref{valueofindex} and $\Lambda(\sigma) \neq \Z^2$ for all $\sigma \in \mathfrak S$ by Lemma \ref{nondegenerate}. Reasoning as before, when we obtained \eqref{pdividesindex}, there exists $\sigma \in \mathfrak S - \{ {\rm id}\}$ such that
$$
3 \mid [\Z^2 : \Lambda(\sigma)].
$$
Then the minimal covering contained in the covering $\{\Lambda(\sigma) : \sigma \in \mathfrak S\}$ (see \eqref{eD4cover}), which exists by Lemma \ref{existenceminimal}, can only be the covering \eqref{ec6} by Theorem \ref{listofcoverings}. Then the covering $\{\Lambda(\sigma) : \sigma \in \mathfrak S\}$ from \eqref{eD4cover} must coincide with \eqref{ec6}.

Write $g = {\rm gcd}(t_2, t_4, 3^\infty)$, $\tilde t_2 = t_2/g$ and $\tilde t_4 = t_4/g$. We now split our discussion according to the classes modulo $3$ of the numbers $g$, $t_2$ and $t_4$.

\subsubsection{ {If $3 \mid g$} }
\label{3dividesg} 
We follow the arguments that led to equation \eqref{pdividesindex}. In particular, consider the three lattices $\tilde{\mathcal L}_p$ defined by \eqref{Def1}, \eqref{Def2} and \eqref{Def3} with $p=3$. At least, one of the coefficients of $x_1$ is non-zero modulo $3$. Thus we get the existence of some $\sigma \in \mathfrak S - \{{\rm id}\}$, such that
$$
\Lambda(\sigma) \subseteq \{(x_1, x_2) : x_1 \equiv 0 \bmod 3\}.
$$
But $\Lambda({\rm id}) = \{(x_1, x_2) : x_1 \equiv 0 \bmod 3\}$ (see \eqref{IdD4} and \eqref{conditionsforDandnu}). So the covering \eqref{eD4cover} can never be equal to \eqref{ec6}, which is the desired contradiction.

\subsubsection{ If $3 \nmid g$, $3 \nmid t_2$, $3 \nmid t_4$ and $t_2 \equiv t_4 \bmod 3$  } 
Under these assumptions, we have the congruences
$$
t_2^2 +t_4^2 \equiv 2 \bmod 3, \ 2t_2t_4 \equiv 2 \bmod 3
$$
and 
$$ 
t_1t_2+t_3t_4 \equiv t_2t_3 +t_1t_4 \bmod 3.
$$
By \eqref{DEF1} and \eqref{DEF3}, we see that the lattices $\mathcal L_3 (T_2^{-1} A_1 T_2)$ and $\mathcal L_3 (T_2^{-1} A_3 T_2)$ coincide. Both have index 3. This implies that the two lattices $\Lambda(T_2^{-1} A_1 T_2)$ and $\Lambda(T_2^{-1} A_3 T_2)$ sit inside the same lattice with index $3$. We obtain the same contradiction as in \S \ref{3dividesg}.

\subsubsection{\ {If $3 \nmid g$, $3 \nmid t_2$, $3 \nmid t_4$ and $t_2 \not\equiv t_4 \bmod 3$} } 
In this situation, we obtain 
$$
t_2^2 +t_4^2 \equiv 2 \bmod 3, 2t_2t_4 \equiv 1 \bmod 3
$$
and 
$$ 
t_1t_2+t_3t_4 \equiv (2t_2t_4) ( t_1t_2 +t_3t_4) \equiv 2 (t_1t_4 +t_2t_3) \bmod 3.
$$
Up to a factor $2$ the equations \eqref{DEF1} and \eqref{DEF3} coincide. Once again, this implies that two of the lattices appearing in the covering \eqref{eD4cover} are sublattices of the same lattice with index $3$. We obtain the same contradiction as above.

\subsubsection{ {If $3 \nmid g$ and if either $3\mid t_2$ or $3 \mid t_4$} }
Consider the lattices $\mathcal L_3 (T_2^{-1} A_1 T_2)$ and $\mathcal L_3 (T_2^{-1} A_2 T_2) $ defined by \eqref{DEF1} and \eqref{DEF2}. Their equations reduce to $\pm (ax_1 + x_2) \equiv 0 \bmod 3$ (for some integer $a$). So these two lattices coincide, and $\Lambda(T_2^{-1} A_1 T_2)$ and $\Lambda(T_2^{-1} A_2 T_2)$ are both included in the same lattice with index $3$. Therefore we obtain a contradiction with the covering \eqref{eD4cover} in this case as well.

The proof of \eqref{Ddifferentfrom3} is complete. Combining \eqref{1502} and \eqref{Ddifferentfrom3}, we have reduced our theorem to the following situation. 

\subsection{ {Study when $D \in \{2, 4\}$} } 
Our first task is to circumscribe the possible values of $\nu$. Recall the conditions \eqref{conditionsforDandnu}. They lead to the following possibilities for $(D, \nu)$: $(2,2)$, $(2,4)$, $(4, 4)$, $(4, 8)$, $(4, 12)$ and $(4, 16)$. We will restrict this list to
\begin{equation}
\label{setforDnu}
(D, \nu) \in \{(2, 2), (2, 4), (4, 4), (4, 8)\}.
\end{equation}
To prove \eqref{setforDnu}, we start from the covering \eqref{eD4cover} with the condition \eqref{1479}. Two possibilities occur

\begin{itemize} 
\item If for some $\sigma^\dag \in \mathfrak S$, one has $\cup_{\sigma \in \mathfrak S -\{ \sigma^\dag\}} \Lambda(\sigma) = \Z^2$. This is a covering by three lattices. By Theorem \ref{listofcoverings}, these three lattices have index $2$. So we have
\begin{equation}
\label{1776}
[\Z^2 : \Lambda(\sigma_0)] = 2 \text{ for some } \sigma_0 \neq {\rm id}.
\end{equation}
\item If such a $\sigma^\dag$ does not exist. The covering \eqref{eD4cover} corresponds to one of the coverings \eqref{ec3}, \eqref{ec4}, \eqref{ec5} or \eqref{ec6} of Theorem \ref{listofcoverings}. Since $\Lambda({\rm id}) $ has index equal to $2$ or $4$, we can eliminate the covering \eqref{ec6} (because the lattices comprising this covering all have an index equal to $3$). Now, in the coverings \eqref{ec3}, \eqref{ec4} and \eqref{ec5}, there are exactly two lattices with index $2$. So \eqref{1776} is also true in that case. 
\end{itemize}

\noindent We now exploit the relation \eqref{gammaminimal}, with the choice $\tau = \sigma_0$, and the relation $[\Z^2 : L(\gamma)] = \nu/D$ by Comment 4 of \ref{listofcomments} to obtain
$$
\nu D^{-1} \leq 2,
$$
which leads to the divisibility
$$
\nu \mid 2D,
$$
since $\nu D^{-1}$ is an integer. This gives the condition \eqref{setforDnu}.
 
We can further restrict the set of possible values for the pair $(D, \nu)$ as follows. By \eqref{Lambda(sigma)=}, we have $\Lambda(\sigma_0) = L (\gamma^{-1} \sigma_0)$, where $\sigma_0$ satisfies \eqref{1776}. But $\vert \det (\gamma^{-1} \sigma_0) \vert = \frac \nu{D^2}$. By Lemma \ref{multipleofdet-1}, we deduce that $2$ is a multiple of $D^2/\nu$. This gives the inequality $D^2 \leq 2\nu$. So we know that the set defined in \eqref{setforDnu} is restricted to
\begin{equation}
\label{newsetforDnu}
(D, \nu) \in \{(2, 2), (2,4), (4, 8)\}. 
\end{equation}

\subsection{ {Possible values for $d_2$} } 
We now investigate the possible values for $d_2$.

\subsubsection{ {The integer $d_2$ has no odd prime divisor} } 
\label{nooddprime} 
We will prove that 
\begin{equation}
\label{1,2,4,8,16}
d_2 \in \{1, 2, 4, 8, 16, \dots\}.
\end{equation}
Suppose that $d_2$ is divisible by some odd prime $p$. We recall that $\nu \in \{2, 4, 8\}$ (see \eqref{newsetforDnu}) and we return to the definitions \eqref{A1}, \eqref{A2} and \eqref{A3} to deduce that the lattices $\Lambda(T_2^{-1} A_1 T_2)$, $\Lambda(T_2^{-1} A_2 T_2)$ and $\Lambda(T_2^{-1} A_3 T_2)$ are respectively included in the following lattices
\begin{align*}
\mathcal M_{1, p} &:
\begin{cases} (t_1t_2+t_3t_4) x_1 + (t_2^2 +t_4^2) x_2\equiv 0 \bmod p,\\
(t_1^2 +t_3^2) x_1 + (t_1t_2+t_3t_4) x_2 \equiv 0 \bmod p,
\end{cases} \\
\mathcal M_{2, p} &:
\begin{cases} (t_3t_4-t_1t_2) x_1 + (t_4^2 -t_2^2) x_2\equiv 0 \bmod p,\\
(t_1^2 -t_3^2) x_1 + (t_1t_2-t_3t_4) x_2 \equiv 0 \bmod p,
\end{cases} \\
\mathcal M_{3, p} &:
\begin{cases} (t_2t_3+t_1t_4) x_1 + (2t_2t_4) x_2\equiv 0 \bmod p,\\
(2t_1t_3) x_1 + (t_2t_3+t_1t_4) x_2 \equiv 0 \bmod p.
\end{cases}
\end{align*}
We use a combinatorial lemma dealing with the antidiagonal coefficients in the above system of equations.

\begin{lemma}
\label{antidiagonal}
Let $p$ be an odd prime. Let $(t_1, t_2, t_3, t_4)\in \Z^4$ be such that $p \nmid {\rm gcd}(t_1, t_2, t_3, t_4)$. Consider the following three $2$--sets of quadratic forms
$$
S_1:= \{t_2^2+t_4^2, t_1^2+t_3^2\}, \quad S_2 := \{t_4^2-t_2^2, t_1^2-t_3^2\}, \quad S_3 := \{2t_2t_4, 2t_1t_3\}.
$$
Then for all pairs $(i, j)$ with $1 \leq i < j \leq 3$, there exists $P \in S_i \cup S_j$ such that $p \nmid P$.
\end{lemma}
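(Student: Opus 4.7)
The plan is to argue by contradiction: fix a pair $(i,j)$ with $1 \leq i < j \leq 3$, assume that every element of $S_i \cup S_j$ is divisible by $p$, and derive the conclusion $t_1 \equiv t_2 \equiv t_3 \equiv t_4 \equiv 0 \bmod p$, contradicting the coprimality hypothesis. Throughout, we use that $p$ is odd, so $2$ is invertible modulo $p$, and all arithmetic takes place in the field $\mathbb{F}_p$.

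I would treat the three cases in parallel, organised by which two sets $S_i, S_j$ are simultaneously annihilated mod $p$. In case $(i,j) = (1,2)$, the congruences $t_2^2 + t_4^2 \equiv 0$ and $t_4^2 - t_2^2 \equiv 0$ force $2 t_2^2 \equiv 0$ and $2 t_4^2 \equiv 0$, hence $t_2 \equiv t_4 \equiv 0 \bmod p$; the pair from $S_1, S_2$ involving $t_1, t_3$ gives symmetrically $t_1 \equiv t_3 \equiv 0 \bmod p$. In case $(i,j) = (1,3)$, from $2 t_2 t_4 \equiv 0$ we get (say) $t_2 \equiv 0$, and then $t_2^2 + t_4^2 \equiv 0$ yields $t_4 \equiv 0$; the same reasoning applied to $2 t_1 t_3$ and $t_1^2 + t_3^2$ kills $t_1$ and $t_3$. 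Case $(i,j) = (2,3)$ is analogous: $2 t_2 t_4 \equiv 0$ gives one of $t_2, t_4$ equal to $0$, and then $t_4^2 - t_2^2 \equiv 0$ forces the other to vanish, and likewise for the pair $(t_1, t_3)$.

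In every case we obtain $p \mid \gcd(t_1, t_2, t_3, t_4)$, contradicting the hypothesis. The argument is essentially a short linear-algebra (or elementary-congruence) exercise in $\mathbb{F}_p$, and there is no real obstacle: the only point where care is needed is invoking the oddness of $p$ to discard the factor $2$ in $S_3$, and to guarantee that the two-variable systems $\{X+Y \equiv 0, X-Y \equiv 0\}$ (with $X = t_2^2$, $Y = t_4^2$, etc.) have only the trivial solution. Thus the proof reduces to three short case analyses, each of the same flavour.
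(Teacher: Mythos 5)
Your proof is correct and complete: in each of the three cases the assumption that $p$ divides every element of $S_i \cup S_j$ forces $p \mid t_1, t_2, t_3, t_4$ (using only that $p$ is odd to cancel the factors of $2$), contradicting the coprimality hypothesis. The paper simply writes ``Omitted'' for this lemma, so your three-line case analysis is exactly the routine verification the authors had in mind.
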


\begin{proof} 
Omitted.
\end{proof}

Lemma \ref{antidiagonal} implies that there exist $i$ and $j$ such that $\mathcal M_{i, p}$ and $\mathcal M_{j, p}$ have index divisible by $p$. Hence the indices of the corresponding lattices $\Lambda(T_2^{-1} A_i T_2) \subseteq \mathcal M_{i, p}$ and $\Lambda(T_2^{-1} A_j T_2) \subseteq \mathcal M_{j, p}$ are divisible by $p$. This is incompatible with the covering \eqref{eD4cover} and Theorem \ref{listofcoverings}, since the index of $\Lambda({\rm id})$ is $2$ or $4$. The proof of \eqref{1,2,4,8,16} is complete. 

\subsubsection{ {The case $d_2 = 1$ is impossible} }
\label{d2differentfrom1} 
Recall that $D \in \{2, 4\}$. By keeping only the first equation in the systems \eqref{IdD4}, \eqref{A1}, \eqref{A2} and \eqref{A3} and by replacing $d_2D$ by $2$, we deduce that the four lattices $\Lambda(\sigma)$ ($\sigma\in \mathfrak S)$ are respectively included in the four lattices defined by the equations
\begin{align*}
\mathcal L_2 ({\rm id}) &: x_1 \equiv 0 \bmod 2,\\
\mathcal L_2 (T_2^{-1} A_1 T_2)&: (t_1t_2+t_3t_4) x_1 + (t_2^2 +t_4^2) x_2 \equiv 0 \bmod 2, \\ 
\mathcal L_2 (T_2^{-1} A_2 T_2) &: (t_3t_4-t_1t_2) x_1 + (t_4^2 -t_2^2) x_2 \equiv 0 \bmod 2, \\
\mathcal L_2 (T_2^{-1} A_3 T_2) &: (t_2t_3+t_1t_4) x_1 \equiv 0 \bmod 2.
\end{align*}
The assumption $d_2 = 1$ implies $t_1t_4 + t_2t_3 \equiv 1 \bmod 2$, from which we deduce the equality 
$$
\mathcal L_2({\rm id}) = \mathcal L_2(T_2^{-1} A_3 T_2).
$$
Considerations of parities also lead to the equality 
$$
\mathcal L_2(T_2^{-1} A_1 T_2) = \mathcal L_2(T_2^{-1} A_2 T_2).
$$ 
Again playing with the parities of the $t_i$ and using $1 = d_2 \equiv t_1t_4 + t_2t_3 \bmod 2$, we see that we never have $t_1t_2+t_3t_4 \equiv t_2^2+t_4^2 \equiv 0 \bmod 2$, which means that $\mathcal L_2 (T_2^{-1} A_1 T_2) \neq \Z^2$. These considerations show that the covering \eqref{eD4cover} leads to the non-trivial covering
$$
\Z^2 = \mathcal L_2 ({\rm id}) \cup \mathcal L_2 (T_2^{-1} A_1 T_2),
$$
which is in contradiction with Theorem \ref{listofcoverings}. 

\subsubsection{ {The case $d_2 = 2$ is impossible} } 
\label{d2differentfrom2}
The strategy is the same as in the section above, but more intricate since we will distinguish cases based on the parity of $t_2$ and $t_4$. We suppose that $d_2 =2$ and we will arrive at a contradiction. Since $4 \mid d_2D$, the four lattices $\Lambda(\sigma)$ ($\sigma\in \mathfrak S)$ are respectively included in the four lattices defined by the equations
\begin{align*}
\mathcal L_2 ({\rm id}) &: x_1 \equiv 0 \bmod 2,\\
\mathcal L_4 (T_2^{-1} A_1 T_2) &: (t_1t_2+t_3t_4) x_1 + (t_2^2 +t_4^2) x_2 \equiv 0 \bmod 4, \\ 
\mathcal L_4 (T_2^{-1} A_2 T_2) &: (t_3t_4-t_1t_2) x_1 + (t_4^2 -t_2^2) x_2 \equiv 0 \bmod 4, \\
\mathcal L_4 (T_2^{-1} A_3 T_2) &: (t_2t_3+t_1t_4) x_1 +(2t_2t_4) x_2 \equiv 0 \bmod 4.
\end{align*}
Recall the equality $\vert t_1t_4-t_2t_3 \vert = d_2 = 2$.

\subsubsubsection{ {If $t_2$ and $t_4$ have different parities} } 
We then have the equalities
$$
{\rm gcd}(t_2^2+t_4^2, 4) = {\rm gcd}(t_4^2-t_2^2, 4) = 1,
$$
which implies
$$
[\Z^2 : \mathcal L_4 (T_2^{-1} A_1 T_2)] = [\Z^2 : \mathcal L_4 (T_2^{-1} A_2 T_2)] = 4
$$
by a direct study of the equations defining these lattices. Furthermore, the coefficient of $x_1$ in the equation defining $\mathcal L_4 (T_2^{-1} A_3 T_2)$ satisfies one of the following two relations
\begin{equation}
\label{equiv2mod4}
t_2t_3+t_1t_4 =
\begin{cases} 
d_2 + 2t_2t_3 \equiv 2 \bmod 4 \\
d_2 + 2t_1t_4 \equiv 2 \bmod 4.
\end{cases}
\end{equation}
This implies the equality between lattices
$$ 
\mathcal L_2({\rm id}) = \mathcal L_4 (T_2^{-1} A_3 T_2).
$$
The covering \eqref{eD4cover} leads to the covering
$$
\Z^2 = \mathcal L_2 ({\rm id}) \cup \mathcal L_4 (T_2^{-1} A_1 T_2) \cup \mathcal L_4 (T_2^{-1} A_2 T_2) ,
$$
which is nonsense, since the index of these three lattices are $2$, $4$ and $4$ respectively. Therefore this covering does not correspond to a minimal covering in Theorem \ref{listofcoverings}. 

\subsubsubsection{ {If $t_2$ and $t_4$ are both even} } 
Under this assumption, we have the following similarities between the coefficients of the lattices $\mathcal L_4 (T_2^{-1} A_1 T_2)$ and $\mathcal L_4 (T_2^{-1} A_2 T_2)$ defined in \S \ref{d2differentfrom2}: 
$$
t_2^2+t_4^2 \equiv t_4^2-t_2^2 \equiv 0 \bmod 4,
$$
and 
$$
t_1t_2+t_3t_4 \equiv t_3t_4-t_1t_2\equiv 0 \bmod 2.
$$
We now discuss on the class $t_1t_2+t_3t_4 \equiv t_3t_4-t_1t_2 \bmod 4.$

\paragraph{If $t_2$ and $t_4$ are both even and if $t_1t_2+t_3t_4 \equiv t_3t_4 -t_1t_2 \equiv 2 \bmod 4$.} 
We return to the definitions of the lattices to deduce the equalities between lattices
$$
\mathcal L_2 ({\rm id}) = \mathcal L_4 (T_2^{-1} A_1 T_2) = \mathcal L_4 (T_2^{-1} A_2 T_2),
$$
which certainly can not lead to a covering.

\paragraph{If $t_2$ and $t_4$ are both even and if $t_1t_2+t_3t_4 \equiv t_3t_4 -t_1t_2 \equiv 0 \bmod 4$.} 
In \eqref{equiv2mod4}, we have already seen that
\begin{equation}
\label{t2t3+t1t4=2}
t_2t_3 + t_1 t_4 \equiv 2 \bmod 4.
\end{equation}
We split our discussion according to the value of $D$ (see \eqref{newsetforDnu}).

\vskip .3cm
 
$\diamond$ {\bf Case 1: $D = 4$.} In that case, we have $\nu = 8$ and $d_2D = 8$, so the second equations of \eqref{A1}, \eqref{A2} and \eqref{A3} are automatically satisfied. We observe that 
$$
\Lambda({\rm id}) = \{(x_1, x_2) : x_1 \equiv 0 \bmod 4\}. 
$$
By \eqref{t2t3+t1t4=2}, we have the inclusion
$$
\Lambda(T_2^{-1} A_3 T_2) \subseteq \Lambda({\rm id}).
$$
The covering \eqref{eD4cover} is simplified to the covering
$$
\Z^2 = \Lambda({\rm id}) \cup \Lambda(T_2^{-1} A_1 T_2) \cup \Lambda(T_2^{-1} A_2 T_2),
$$
where the first lattice has index $4$ and where the last two lattices have an index $\geq 2$. This covering with three lattices does not resonate with Theorem \ref{listofcoverings}.

\vskip .3cm

$\diamond $ {\bf Case 2: $D = 2$.} We then have 
$$
d_2 = D = 2, \quad \nu \in \{2, 4\}, \quad t_2 \equiv t_4 \equiv 0 \bmod 2 \quad \text{ and } \quad t_1t_2+t_3t_4 \equiv 0 \bmod 4.
$$ 
Actually, the case $\nu = 4$ can never happen. Indeed, by the formula given in Lemma \ref{lemma4.1}, with the values $d_2 = D = 2$, $\nu =4$ and the constraints of the congruence modulo $4$ of the $t_i$, we see that $\Lambda(T_2^{-1} A_1 T_2) = \Z^2$. This is forbidden by Lemma \ref{nondegenerate}.
 
So we restrict to $\nu = 2$. The equations of the lattices $\Lambda(\sigma)$ ($\sigma \in \mathfrak S$) given in Lemma \ref{lemma4.1} are equivalent to a single equation 
\begin{equation*}
\begin{cases}
\Lambda({\rm id}) &: x_1\equiv 0 \bmod 2,\\
\Lambda(T_2^{-1} A_1 T_2) &: (t_1^2+t_3^2) x_1\equiv 0 \bmod 2, \\
\Lambda(T_2^{-1} A_2 T_2) &: (t_1^2 -t_3^2) x_1\equiv 0 \bmod 2,\\
\Lambda(T_2^{-1} A_3 T_2) &: x_1 \equiv 0 \bmod 2.
\end{cases}
\end{equation*}
Since $\Lambda(\sigma) \neq \Z^2$ by Lemma \ref{nondegenerate}, this implies that the coefficients of $x_1$ in the second and the third equations are odd. We deduce that these four equations show the equality 
$$
\Lambda(\sigma) = \{ (x_1, x_2) : x_1 \equiv 0 \bmod 2\} \text{ for all } \sigma \in \mathfrak S.
$$
This contradicts the covering \eqref{eD4cover}.

\subsubsubsection{ {If $t_2$ and $t_4$ are both odd} } 
The equality $t_1t_4-t_2t_3 = \pm 2$ implies that $t_1$ and $t_3$ have the same parity. We now split the argument according to this parity.

\paragraph{If $t_2$ and $t_4$ are both odd and if $t_1\equiv t_3 \equiv 0 \bmod 2$.} 
Since $d_2 = 2$, these conditions imply that $(t_1, t_3) \equiv (0,2)$ or $(2,0)$ modulo $4$. Therefore we have
$$
t_1t_2+t_3t_4 \equiv t_3t_4-t_1t_2 \equiv t_2t_3+t_1t_4 \equiv 2 \bmod 4.
$$
We implement these congruences into the first equations of \eqref{A1}, \eqref{A2} and \eqref{A3} to deduce the inclusions
$$
\Lambda( T_2^{-1} A_1 T_2), \, \Lambda(T_2^{-1} A_3 T_2) \subseteq \{(x_1, x_2) : x_1 +x_2 \equiv 0 \bmod 2\},
$$
and
$$
\Lambda(T_2^{-1} A_2 T_2) \subseteq \{ (x_1, x_2) : x_1 \equiv 0 \bmod 2\}.
$$
Recalling the inclusion
\begin{equation}
\label{easyinclusion}
\Lambda({\rm id}) \subseteq \{(x_1, x_2) : x_1 \equiv 0 \bmod 2\},
\end{equation}
and returning to the covering \eqref{eD4cover}, we obtain a covering of $\Z^2$ by two lattices with index $2$. This contradicts Theorem \ref{listofcoverings}.

\paragraph{If $t_2$ and $t_4$ are both odd and if $t_1\equiv t_3 \equiv 1 \bmod 2$.} 
Since $d_2 = 2$, the following congruences hold
$$
t_1t_2+t_3t_4 \equiv t_2t_3 + t_1 t_4 \equiv 0 \bmod 4 \quad \text{ and } \quad t_3t_4 -t_1t_2 \equiv 2 \bmod 4.
$$
We insert these congruences and the condition $4 \mid d_2D$ into the first equations of the systems \eqref{A1}, \eqref{A2} and \eqref{A3} to obtain the inclusions
$$
\Lambda(T_2^{-1} A_1 T_2), \Lambda(T_2^{-1} A_3 T_2) \subseteq \{(x_1, x_2) : x_2 \equiv 0 \bmod 2\},
$$
and
$$
\Lambda(T_2^{-1} A_2 T_2) \subseteq \{ (x_1, x_2) : x_1 \equiv 0 \bmod 2\}.
$$
These inclusions and the inclusion \eqref{easyinclusion} contradict \eqref{eD4cover}, since we would once more obtain a covering of $\Z^2$ by two lattices with index $2$.
 
Gathering all these cases, we proved that $d_2 \neq 2$. 

\subsubsection{ {The case $4 \mid d_2$ is impossible} }
\label{d2notdivisibleby4} 
We will suppose that $4 \mid d_2$ to arrive at a contradiction. This implies that $8 \mid d_2D$ thanks to \eqref{newsetforDnu}. We divide our proof according to the parity of $t_2$ and $t_4$.

\subsubsubsection{ {If $t_2$ and $t_4$ have different parities} } 
In that case, we have $\gcd(t_2^2 \pm t_4^2, 8) = 1$ and the first equations of \eqref{A1} and \eqref{A2} give the divisibility
$$
8 \mid [\Z^2 : \Lambda(T_2^{-1} A_i T_2)] \quad \text{ for } i \in \{1, 2\}.
$$
The covering \eqref{eD4cover} can not hold: we would obtain a covering with four proper lattices, with at least two with index divisible by $8$. This does not exist by Theorem \ref{listofcoverings}. 

\subsubsubsection{ {If $t_2$ and $t_4$ have the same parity} } 
Our first step is to show that we necessarily have
\begin{equation}
\label{t1 =t3bmod2}
t_1 \equiv t_3 \bmod 2.
\end{equation}
Suppose that this does not hold. We always have
$$
t_1^2 + t_3^2 \equiv t_1^2 - t_3^2 \bmod 2.
$$
We argue as follows:
 
\vskip .3cm
$\bullet$ If $t_1 \not\equiv t_3 \bmod 2$ and $t_2 \equiv t_4 \equiv 0 \bmod 2$. By \eqref{newsetforDnu}, we know that $2 \mid d_2D/\nu$. By the first equation of \eqref{IdD4} and the second equation of \eqref{A1} and \eqref{A2}, we deduce the inclusions
$$
\Lambda(\sigma) \subseteq \{ (x_1, x_2) : x_1 \equiv 0 \bmod 2\} \text{ for } \sigma \in \mathfrak S - \{T_2^{-1} A_3 T_2 \}.
$$
By \eqref{eD4cover}, we would obtain a covering by two proper lattices, and this contradicts Theorem~\ref{listofcoverings}. 
\vskip .3cm
$\bullet$ If $t_1 \not\equiv t_3 \bmod 2$ and $t_2 \equiv t_4 \equiv 1 \bmod 2$. We only study the first equations of \eqref{A1}, \eqref{A2} and \eqref{A3}. We use the congruences 
$$
t_1t_2+t_3t_4 \equiv t_3t_4 -t_1t_2 \equiv t_2t_3 +t_1t_4 \equiv 1 \bmod 2 \quad \text{ and } \quad 8 \mid d_2D
$$
to deduce that 
$$
8 \mid [\Z^2 : \Lambda(T_2^{-1} A_i T_2)] \quad \text { for } 1 \leq i \leq 3.
$$
The covering \eqref{eD4cover} then would be incompatible with Theorem \ref{listofcoverings}. So \eqref{t1 =t3bmod2} is proved. 

To summarize, we necessarily have the congruence conditions
$$
t_2 \equiv t_4 \bmod 2 \text{ and } t_1\equiv t_3 \bmod 2.
$$
Recall that the $t_i$ are coprime altogether, so modulo $2$, the quadruplet $(t_1, t_2, t_3, t_4)$ belongs to the following set $\Omega$ with three elements
$$
\Omega := \{(1,0,1, 0), (0,1,0,1), (1,1,1,1)\}.
$$
These possibilities will be the basis of our discussion below.

\paragraph{If $(t_1, t_2, t_3, t_4)\equiv (0,1,0,1) $ or $(1,1,1,1) \bmod 2$.} 
We then have the congruences $t_2^2+t_4^2 \equiv 2 \bmod 8$ and $2t_2t_4\equiv \pm 2 \bmod 8$. By considering the first equations in the systems \eqref{A1} and \eqref{A3}, we obtain the inequality 
$$
v_2([\Z^2 : \Lambda(T_2^{-1} A_i T_2)]) \geq 2 \text{ for } i \in \{1, 3\}.
$$
Furthermore, for this to be an equality, we must have $d_2D = 8$, which means $d_2 = 4$ and $D = 2$. 
 
When $(D, d_2) \neq (2, 4)$ (which is equivalent to $16 \mid d_2D$), the lattices $\Lambda(T_2^{-1}A_iT_2) $ for $i \in \{1, 3\}$ have index divisible by $8$. The covering \eqref{eD4cover} is therefore impossible thanks to Theorem \ref{listofcoverings}.
 
When $(D, d_2) = (2, 4)$, the congruences
$$
\pm d_2 = t_1t_4-t_2t_3 \equiv 4 \bmod 8,
$$
and $t_2 \equiv t_4 \equiv 1 \bmod 2$ imply 
$$
4 \equiv t_1t_4-t_2t_3 \equiv t_2t_4( t_1t_4-t_2t_3) \equiv t_1t_2-t_3t_4 \bmod 8.
$$
Since we also have $t_4^2-t_2^2\equiv 0 \bmod 8$, we conclude that
$$
\Lambda(T_2^{-1} A_2 T_2) \subseteq \{ (x_1, x_2): x_1 \equiv 0 \bmod 2\}
$$
by \eqref{A2} and by Lemma \ref{compare2adic}. Since $\Lambda( {\rm id}) $ satisfies the same inclusion (see \eqref{easyinclusion}) we obtain a contradiction with \eqref{eD4cover}, since we would obtain a covering of $\Z^2$ by three lattices with index $2$, $4$ and $4$.

\paragraph{If $(t_1, t_2, t_3, t_4)\equiv (1,0,1,0) \bmod 2$.} 
This case is more delicate. We handle this case by splitting the proof in three cases.

\vskip .3cm
$\diamond$ {\bf Case 1: $v_2(t_2) \neq v_2(t_4) \ (\geq 1)$.} We have the sequence of relations
$$
2\leq v_2(d_2) = \min (v_2(t_2), v_2(t_4) ) = v_2(t_1t_2 +t_3t_4) = v_2(t_3t_4-t_1t_2) = v_2(t_2t_3+t_1t_4),
$$
and the inequalities (recall that $D \in \{ 2, \, 4\}$)
$$
2 \min (v_2(t_2), v_2(t_4) ) = v_2(t_2^2 +t_4^2) = v_2(t_4^2-t_2^2) \geq v_2(d_2 D) \text{ and } v_2(2t_2t_4)\geq v_2(d_2D).
$$
With these remarks, we consider the first equations of the systems \eqref{IdD4}, \eqref{A1}, \eqref{A2} and \eqref{A3} to deduce the inclusions
$$
\Lambda(\sigma) \subseteq \{ (x_1, x_2) : x_1 \equiv 0 \bmod 2\} \text{ for all } \sigma \in \mathfrak S.
$$
These relations obviously contradict the covering \eqref{eD4cover}. 
 
Now we suppose that $v_2(t_2) = v_2(t_4) \ (\geq 1)$. Since, by hypothesis, the integers $t_1$ and $t_3$ are both odd, we have $v_2(d_2) > v_2(t_2) =v_2(t_4)$. So we split our forthcoming discussion according to the difference between $v_2(d_2)$ and $v_2(t_2) = v_2(t_4)$.

\vskip .3cm
$\diamond$ {\bf Case 2: $v_2(t_2) = v_2(t_4) \geq 1$ and $v_2(d_2) = v_2(t_2) +1$.} Under these hypotheses, we have
\begin{equation}
\label{2067}
v_2(t_1t_2 +t_3t_4) >v_2(d_2) \text{ and } v_2(t_2^2 +t_4^2) >v_2(d_2).
\end{equation}
For the first equality of \eqref{2067}, we use that
$$
t_1t_2 +t_3t_4 \equiv t_1t_4 +t_2 t_3 \equiv \pm d_2 +2 t_2 t_3 \equiv 0 \bmod 2d_2.
$$ 
For the second equation, we use the equality $v_2(t_2^2 +t_4^2) = 1 + 2v_2(t_2)$. By \eqref{newsetforDnu}, we have three possibilities
\begin{equation}
\label{2075}
(d_2, D, \nu) = (4, 2, 4), \quad v_2(d_2 D/\nu) > 1 \quad \text{ or } \quad D = 4.
\end{equation}
 
\vskip .3cm
\noindent $\bullet$ The first possibility is impossible because, with the above values of $d_2$, $D$ and $\nu$ and with the conditions on the $2$-adic valuations of the $t_i$, the two equations defining $\Lambda(T_2^{-1} A_1T_2)$ (see \eqref{A1}) are congruences modulo $8$, and all the coefficients of $x_1$ and $x_2$ are $\equiv 0 \bmod 8$. So we would have $\Lambda(T_2^{-1} A_1 T_2) = \Z^2$, which is contrary to Lemma \ref{nondegenerate}.

\vskip .3cm
\noindent $\bullet$ The second possibility of \eqref{2075} can not hold for the following reason: consider the second equations of the systems \eqref{A1} and \eqref{A3}. We benefit from the congruences
$$
t_1^2 +t_3 ^2 \equiv 2 t_1t_3 \equiv 2 \bmod 4
$$
to deduce the two inclusions
$$
\Lambda(T_2^{-1} A_i T_2) \subseteq \{ (x_1, x_2): x_1 \equiv 0 \bmod 2\} \quad \text{ for } i \in \{1, 3\}.
$$
Since the same inclusion holds for $\Lambda({\rm id})$ and since $\Lambda(T_2^{-1} A_2T_2) \neq \Z^2$, the covering \eqref{eD4cover} is impossible by Theorem \ref{listofcoverings}. 

\vskip .3cm
\noindent $\bullet$ We now prove that the third possibility of \eqref{2075} can not hold. Indeed, suppose that $D = 4$ and, as usual, let $g:= \gcd(t_2, t_4), \tilde t_2 := t_2 /g$ and $\tilde t_4 := t_4/g$. Since $g \mid d_2$, $g$ is necessarily a power of $2$ and we have $v_2(g) = v_2(t_2) = v_2(t_4) = v_2(d_2) - 1 \ (\geq 1)$ and the integers $\tilde t_2$ and $\tilde t_4$ are odd. We then have
$$
2 \equiv t_1 \tilde t_4 -\tilde t_2 t_3 \equiv (t_1t_3) (t_1\tilde t_4 - \tilde t_2 t_3) \equiv t_3 \tilde t_4 -t_1 \tilde t_2 \bmod 4,
$$
and therefore
\begin{equation}
\label{2095}
v_2(t_3t_4-t_1t_2) = v_2(d_2).
\end{equation}
Since $v_2(t_2) \geq 1$, it follows that
\begin{equation}
\label{2099}
t_4^2 -t_2^2 \equiv 0 \bmod d_2 D.
\end{equation}
To prove \eqref{2099}, write $t_2 = 2^t a_2$, $t_4 = 2^t a_4$ with odd $a_2$ and $a_4$ and $t \geq 1$. Then $t_4^2-t_2^2 =2^{2t} (a_4^2-a_2^2) \equiv 0 \bmod 2^{2t+2}$. Furthermore, $d_2 D = 2^{t+1} \cdot 4 = 2^{t+3}$. We obtain the desired congruence \eqref{2099}, since $2t+2 \geq t+3$ for $t \geq 1$. 
 
By the first equations of \eqref{IdD4} and \eqref{A2}, by \eqref{2095} and \eqref{2099} and by the hypothesis $D = 4$, we obtain the inclusions
$$
\Lambda({\rm id}),\ \Lambda(T_2^{-1} A_2 T_2) \subseteq \{ (x_1, x_2) : x_1 \equiv 0 \bmod 4\}.
$$
These inclusions are not compatible with the covering \eqref{eD4cover}, since we would obtain a covering of $\Z^2$ by three lattices containing one lattice with index $4$ (see Theorem \ref{listofcoverings}). Hence the case $D = 4$ does not happen. 

\vskip .3cm
 
$\diamond$ {\bf Case 3: $v_2(t_2) = v_2(t_4) \geq 1$ and $v_2(d_2) > v_2(t_2) +1$.} We will first prove the two equalities
\begin{equation}
\label{1972}
v_2(t_1t_2+t_3t_4) = v_2(t_2) + 1 \quad \text{ and } \quad v_2(t_2t_3 +t_1t_4) = v_2(t_2) + 1.
\end{equation}
\noindent $\bullet$ To prove the first equality, we write
$$
v_2(t_1t_2 + t_3t_4) = v_2(t_1t_2t_3 + t_3^2 t_4) = v_2(t_1 (t_1t_4 \pm d_2) + t_3^2t_4) = v_2((t_1^2 + t_3^2)t_4 \pm t_1d_2).
$$
We now observe that
$$
v_2(\pm t_1d_2) = v_2(d_2)
$$
and 
$$
v_2((t_1^2+t_3^2)t_4) = v_2(t_1^2 +t_3^2) +v_2(t_4) = v_2(t_2) +1,
$$ 
because $t_1$ and $t_3$ are both odd. Therefore we conclude that
$$
v_2(t_1t_2 + t_3t_4) = v_2((t_1^2 + t_3^2)t_4 \pm t_1d_2) = v_2(t_2) +1,
$$
since $v_2(t_2) +1 < v_2(d_2) $ by assumption. 
  
\noindent $\bullet$ To prove the second equality of \eqref{1972}, we write
$$
v_2(t_2t_3+t_1t_4) = v_2(2t_2t_3 +t_1t_4 -t_2t_3) = v_2(2t_2t_3 \pm d_2) = v_2(2t_2t_3) = v_2(2t_2) = v_2(t_2) +1.
$$
The proof of \eqref{1972} is complete. 

We now use \eqref{1972} to deduce from the first equations of \eqref{A1} and \eqref{A3} the two inclusions
\begin{equation}
\label{2140}
\Lambda(T_2^{-1} A_i T_2) \subseteq \{ (x_1, x_2) : x_1 \equiv 0 \bmod 2\} \quad \text{ for } i \in \{1, 3\}.
\end{equation}

\vskip .2cm
\noindent $\bullet$ Proof of \eqref{2140} for $i = 1$. The coefficient of $x_1$ in the first equation of \eqref{A1} satisfies $v_2(t_1t_2+t_3t_4 ) = v_2(t_2) +1 < v_2(d_2) < v_2(d_2 D)$. Furthermore, the coefficient of $x_2$ satisfies $v_2(t_2^2+t_4^2) = 1+2 v_2(t_2) > v_2(t_2) +1 = v_2(t_1t_2 + t_3t_4)$. Lemma \ref{compare2adic} gives the inclusion \eqref{2140} when $i = 1$. 

\vskip .2cm
\noindent $\bullet$ Proof of \eqref{2140} for $i = 3$. The coefficient of $x_1$ in the first equation of \eqref{A3} satisfies $v_2(t_2t_3+t_1t_4) = v_2(t_2) +1 < v_2(d_2) < v_2(d_2 D)$. Furthermore, the coefficient of $x_2$ satisfies $v_2(2t_2t_4) = 2 v_2(t_2) +1 > v_2(t_2) +1 = v_2(t_2t_3+t_1t_4)$. Lemma \ref{compare2adic} gives the inclusion \eqref{2140} when $i = 3$.

The proof of \eqref{2140} is complete. Combining the inclusions \eqref{2140} with the inclusion $\Lambda({\rm id}) \subseteq \{(x_1, x_2) : x_1 \equiv 0 \bmod 2\}$ and with the covering \eqref{eD4cover}, we arrive at a covering of $\Z^2$ by two proper lattices, which does not exist by Theorem \ref{listofcoverings}.

We investigated all the cases to assert that $4 \nmid d_2$. Gathering the properties of $d_2$ proved in \S \ref{nooddprime}, \S \ref{d2differentfrom1}, \S \ref{d2differentfrom2} and \S \ref{d2notdivisibleby4}, we see that $d_2$ does not exist. 

The proof of Theorem \ref{centralD4} is complete.

\end{document}